\newtheorem{theorem}{Theorem}[section]
\newtheorem{lemma}[theorem]{Lemma}
\newtheorem{corollary}[theorem]{Corollary}
\newcommand{\homeo}{\textrm{Homeo}}
\newcommand{\so}{\textrm{SO}}
\newcommand{\wt}{\widetilde}
\newcommand{\im}{\textrm{Im}}
\newcommand{\ub}{\textrm{b}}
\newcommand{\bilip}{\textrm{Bilip}}
\newcommand{\diff}{\textrm{Diff}}
\newcommand{\pl}{\textrm{PL}}
\newcommand{\qi}{\mathcal{QI}}
\newcommand{\grad}{\textrm{grad}}
\newcommand{\fix}{\textrm{Fix}}
\begin{document}
\baselineskip=15.5pt
\title[Quasi-isometry groups of Euclidean spaces]{Embedding certain diffeomorphism groups in  the quasi-isometry groups of Euclidean spaces} 
\author{Oorna Mitra } 
\author{Parameswaran Sankaran}
\address{Institute of Mathematical Sciences, (HBNI), 
CIT Campus, Taramani, Chennai 600113.}

\email{oornamitra@imsc.res.in}
\email{sankaran@imsc.res.in}
\subjclass[2010]{20F65}
\keywords{Diffeomorphism groups, PL-homeomorphism groups, quasi-isometries of Euclidean spaces}
\thispagestyle{empty}
\date{}
\begin{abstract}
We show that certain groups of diffeomorphisms and PL-homeomorphisms embed in the group of all 
quasi-isometries of the Euclidean spaces. 
\end{abstract}

\maketitle

\section{Introduction} \label{intro}
Let $\Gamma$ be a finitely generated group with finite generating set $A
\subset \Gamma$.  The word metric $d_A$ makes $\Gamma$ into a metric space. Replacing $A$ 
by another finite generating set changes the metric on $\Gamma$ but not the quasi-isometry type of 
$(\Gamma, d_A)$.  Thus the quasi-isometric invariants of $(\Gamma,d_A)$ are intrinsic to the group $\Gamma$ itself.
The group of all self-quasi-isometries of $(\Gamma,d_A)$ is one such an invariant and is 
denoted $\qi(\Gamma)$.    More generally, for any metric space $X$, the group $\qi(X)$  
of all self-quasi-isometries of $X$ is an quasi-isometric invariant of $X$.

In general $\qi(\Gamma)$ is hard to determine.  It appears that there are very few families of groups $\Gamma$ 
for which $\qi(\Gamma)$ has an explicit description.  These include irreducible lattices in semisimple Lie groups, (see \cite{farb} and the references therein), solvable Baumslag-Solitar groups $BS(1,n)$ \cite[Theorem 7.1]{fm}, the groups $BS(m,n), 1<m<n$, 
\cite[Theorem 4.3]{whyte}, 
the group $B_2(\mathbb Z[1/m])$ of $2\times 2$-upper triangular matrices over $\mathbb Z[1/m]$, 
\cite{tw}, $B_n(\mathbb Z[1/p])$ for $p$ a prime and $n>2$, \cite{wortman},  
and,  the lamplighter groups 
$G\wr \mathbb{Z}$ with $G$ finite \cite{efw}.

Gromov and Pansu \cite[\S3.3.B]{gromov-pansu} noted that $\qi(\mathbb Z)$ is an infinite dimensional group.  
It was shown by Sankaran \cite{s} that $\qi(\mathbb Z)\cong \qi(\mathbb R)$ contains, for example, the 
free group of rank the continuum and a copy of the group of all compactly supported diffeomorphisms 
of $\mathbb R$.  The proof techniques used in \cite{s} heavily relied on the one-dimensionality of 
the real line.   In this note we show that, as in the case of $\mathbb Z$, the group $\qi(\mathbb Z^n)\cong \qi(\mathbb R^n)$ is 
large and contains many diffeomorphism groups.  More precisely,  we have the following result.  

\begin{theorem} \label{main} Let $n\ge 2$.  
The following groups can be embedded in $\qi(\mathbb{Z}^n)$. \\ 
{\em (i) $\bilip(\mathbb{S}^{n-1})$,} in particular, 
{\em $\diff^{\,r}(\mathbb{S}^{n-1}), 1\le r\le \infty,\pl (\mathbb S^{n-1}), $}\\
{\em (ii) $\diff^{\,r}(\mathbb D^n,\mathbb S^{n-1}), 1\le r\le \infty$}, where $\mathbb D^n$ denotes the disk 
$\{v\in \mathbb R^n\mid ||v||\le 1\}$,\\
{\em (iii)} $\qi(\mathbb R^k)\times \qi(\mathbb R^{n-k}), 1\le k<n$,\\
{\em (iv)  $\diff^{\,r}_\kappa(\mathbb R^n), 1\le r\le \infty, ~\pl_\kappa(\mathbb R^n)$.}\\
\end{theorem}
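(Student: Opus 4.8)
The plan is to work throughout with $\qi(\mathbb R^n)$ rather than $\qi(\mathbb Z^n)$: the inclusion $\mathbb Z^n\hookrightarrow\mathbb R^n$ is a quasi-isometry and quasi-isometric spaces have isomorphic quasi-isometry groups, so $\qi(\mathbb Z^n)\cong\qi(\mathbb R^n)$. In each part I will attach to every group element an explicit, typically \emph{bilipschitz}, self-map of $\mathbb R^n$, verify that the assignment is a homomorphism into the group of bilipschitz self-maps (hence into $\qi(\mathbb R^n)$), and then check injectivity after passing to the quotient by the relation of being at bounded distance. This last point, rather than continuity or the homomorphism property, is where the content lies.

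For (i) I would use the radial extension (coning). Given $f\in\bilip(\mathbb S^{n-1})$ with bilipschitz constant $L$, set $F_f(v)=\|v\|\,f(v/\|v\|)$ for $v\neq 0$ and $F_f(0)=0$. Writing $v=r\theta,\ w=s\phi$ with $\theta,\phi\in\mathbb S^{n-1}$, the identity $\|v-w\|^2=(r-s)^2+rs\,\|\theta-\phi\|^2$, together with the same identity for $F_f$ (valid because $f(\theta),f(\phi)$ are again unit vectors), reduces the estimate to $L^{-1}\|\theta-\phi\|\le\|f(\theta)-f(\phi)\|\le L\|\theta-\phi\|$ and yields that $F_f$ is $L$-bilipschitz. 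The relation $F_{f\circ g}=F_f\circ F_g$ is immediate, and if $f(\theta_0)\neq g(\theta_0)$ then $\|F_f(r\theta_0)-F_g(r\theta_0)\|=r\|f(\theta_0)-g(\theta_0)\|\to\infty$, so distinct $f$ yield quasi-isometries at unbounded distance. The stated special cases follow since diffeomorphisms of a closed manifold and PL-homeomorphisms of a compact polyhedron are automatically bilipschitz.

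Parts (ii) and (iv) I would treat together by the device of \emph{rescaled copies placed far out in $\mathbb R^n$}. A diffeomorphism $\phi$ of $\mathbb D^n$ fixing $\mathbb S^{n-1}$ pointwise (resp.\ a compactly supported diffeomorphism or PL-homeomorphism) extends by the identity to a bilipschitz self-map of $\mathbb R^n$ supported in a disk; but such a map has bounded displacement, hence is trivial in $\qi(\mathbb R^n)$. To manufacture a nontrivial element I fix disjoint balls $B_k=B(p_k,\lambda_k)$ with $\|p_k\|\to\infty$ and $\lambda_k\to\infty$ (for instance $p_k=(4^k,0,\dots,0)$, $\lambda_k=2^k$), and let $\Phi$ act on $B_k$ as the rescaled copy $x\mapsto p_k+\lambda_k\,\phi((x-p_k)/\lambda_k)$ and as the identity elsewhere. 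Since rescaling is conformal, every $\Phi|_{B_k}$ has the \emph{same} bilipschitz constant $L$ as $\phi$; that $\Phi$ is globally $L$-bilipschitz one checks for $x\in B_k,\ y\notin B_k$ by cutting the segment $[x,y]$ at the point $z\in\partial B_k=\fix(\Phi)$ and applying the one-sided bound to both $\Phi$ and $\Phi^{-1}$ (which is of the same form). Fixing $\{B_k,\lambda_k\}$ once and for all makes $\phi\mapsto\Phi$ a homomorphism, and if $\phi(x_0)\neq x_0$ then the displacement of $p_k+\lambda_k x_0$ is $\lambda_k\|\phi(x_0)-x_0\|\to\infty$, giving injectivity; this simultaneously settles (iv), as every compactly supported map is supported in some disk.

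Finally, for (iii), given quasi-isometries $F$ of $\mathbb R^k$ and $G$ of $\mathbb R^{n-k}$ I would take the product map $(x,y)\mapsto(F(x),G(y))$ on $\mathbb R^k\times\mathbb R^{n-k}=\mathbb R^n$; with the sup metric (bilipschitz to the Euclidean one) this is visibly a quasi-isometry, the assignment is a homomorphism, and injectivity follows by restricting to the slices $y=0$ and $x=0$. The main obstacle throughout is not any single estimate but the tension built into (ii) and (iv): the input maps are coarsely invisible, so the crux is to engineer, via the unbounded rescalings $\lambda_k$ on disjoint balls $B_k$, maps that stay uniformly bilipschitz yet are genuinely at unbounded distance from one another.
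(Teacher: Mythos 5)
Parts (i), (ii) and (iii) of your proposal are correct and follow essentially the same route as the paper: the radial extension $\wt{\phi}(v)=||v||\,\phi(v/||v||)$ for (i) (the paper's Lemma \ref{embeddingbilipsphere}), rescaled copies on the disjoint balls $B(4^ke_1,2^k)$ for (ii) (the paper's Lemma \ref{embeddingofG0}, with the same cutting-at-fixed-boundary-points argument), and the product construction with the equivalent norm for (iii). In fact your estimate in (i), via the identity $||v-w||^2=(r-s)^2+rs\,||\theta-\phi||^2$, is cleaner than the paper's: the paper compares points on different spheres by projecting one onto the sphere of the other and only obtains the bi-Lipschitz constant $\lambda+1$, whereas your polarization argument gives the sharp constant $L$ directly.

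The gap is in (iv). Your one-clause reduction --- ``this simultaneously settles (iv), as every compactly supported map is supported in some disk'' --- does not define a homomorphism $\diff^{\,r}_\kappa(\mathbb R^n)\to\qi(\mathbb R^n)$. The construction of (ii) takes as input maps supported in the \emph{fixed} unit disk $\mathbb D^n$, and $\diff^{\,r}_\kappa(\mathbb R^n)$ is not a subgroup of $\diff^{\,r}(\mathbb D^n,\mathbb S^{n-1})$: each $\phi$ is supported in some $D(0,R)$, but $R=R(\phi)$ varies. If you try to normalize each $\phi$ by the dilation $s_R(v)=v/R$ so that its support lands in $\mathbb D^n$, the assignment $\phi\mapsto s_{R(\phi)}\,\phi\, s_{R(\phi)}^{-1}$ is not compatible with composition: for $\phi_1,\phi_2$ with $R_1\ne R_2$ one has, in general, $s_{R_1}\phi_1 s_{R_1}^{-1}\circ s_{R_2}\phi_2 s_{R_2}^{-1}\ne s_{R}(\phi_1\phi_2)s_{R}^{-1}$ for any $R$, since dilations do not commute with the $\phi_i$; so no embedding has actually been specified. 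The missing idea, which is how the paper argues, is to fix once and for all a diffeomorphism $\pi:\mathbb R^n\to\mathbb B^n$ onto the \emph{open} unit ball and conjugate by it: $\phi\mapsto\pi\phi\pi^{-1}$ is an isomorphism $\diff^{\,r}_\kappa(\mathbb R^n)\cong\diff^{\,r}_\kappa(\mathbb B^n)$, and each $\pi\phi\pi^{-1}$, having support a compact subset of $\mathbb B^n$, extends by the identity to an element of $\diff^{\,r}(\mathbb D^n,\mathbb S^{n-1})$, after which (ii) applies. The PL case requires the same device with a PL homeomorphism $\mathbb R^n\to\mathbb B^n$, together with the observation that a compactly supported PL homeomorphism of $\mathbb B^n$ extends to a bi-Lipschitz homeomorphism of $(\mathbb D^n,\mathbb S^{n-1})$. (Alternatively one could salvage your element-dependent supports by placing the $k$-th rescaled copy only for $k\ge\log_2 R(\phi)$ and checking that the resulting class in $\qi(\mathbb R^n)$ is independent of the cutoff and multiplicative, but that is an additional argument you have not given.)
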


The notations used in the above theorem will be elaborated on in \S2. Part (ii) of the above theorem will be extended, in \S4,  
to the group of $C^r$-diffeomorphisms of the pair $(V,\partial V)$ where $V$ is any compact smooth $n$-dimensional 
manifold with boundary $\partial V$.   

Our proofs involve only elementary considerations. 
After observing that the induced Riemannian metric and the Euclidean metric  on the sphere are bi-Lipschitz equivalent, proof 
of part (i) involves radial extension of homemomorphisms of $\mathbb S^{n-1}$ to $\mathbb R^n$. 
Proof of part (ii) involves magnifying exponentially the features of 
homeomorphisms of the disk and replicating them on larger and larger pairwise disjoint disks positioned appropriately.  
Part (iv) is derived from (ii), while part (iii) is an elementary observation.

\section{Preliminaries} 
Let $(X,d)$ be a metric space. Recall that $f\in \homeo(X)$ is bi-Lipschitz if there exists a constant $\lambda\ge 1$ such that 
$(1/\lambda) d(x,y)\le d(f(x),f(y))\le \lambda d(x,y)$.  Any such $\lambda$ will be called a bi-Lipschitz constant for $f$.  
The group of all bi-Lipschitz homeomorphisms 
of $X$ is denoted by $\bilip(X,d)$.  Recall that two metrics $d$ and $\delta$ on $X$ are bi-Lipschitz equivalent if 
there exists a $\lambda\ge 1$ such that $(1/\lambda)d(x,y)\le \delta(x,y)\le \lambda d(x,y)$ for all $x,y\in X$.  If $d$ and $\delta$ are 
bi-Lipschitz equivalent, then $\bilip(X,d)=\bilip(X,\delta)$.  If $d$ is clear from the context, we shall abbreviate $\bilip(X,d)$ to $\bilip(X)$. 

We denote by $\diff^{\,r}(M)$ the group of all 
$C^r$-diffeomorphisms of a smooth manifold $M$ where $1\le r\le \infty$.  The group of compactly supported 
homeomorphisms of $M$ will be denoted $\homeo_\kappa(M)$.  If $G(M)$ is a group of 
homeomorphisms of $M$, the subgroup $\homeo_\kappa(M)\cap G(M)\subset G(M)$ will be 
denoted $G_\kappa(M)$.  If $M$ is a manifold with boundary, $\diff^{\,r}(M,\partial M)$ denotes the subgroup of $\diff^{\,r}(M)$ 
that fixes the boundary $\partial M$ pointwise.   
If $M$ is a Riemannian manifold, $\diff^{\,r}_\ub(M)$ denotes the subgroup of 
$\diff^{\,r}(M)$ consisting of those diffeomorphisms such that the norms $||T_x\phi||, ||T_x\phi^{-1}||$ of the 
differentials $T_x\phi: T_xM \to T_{\phi(x)}M$ are uniformly bounded in the following sense: 
There exists a real number $\lambda=\lambda(\phi)>1$ such that 
$\lambda^{-1}\le ||T_x\phi||, ||T_x\phi^{-1}||\le \lambda$ for all $x\in M$.  Note that $\diff_\ub^{\,r}(M)$ 
contains $\diff_\kappa^{\,r}(M)$, the group of all compactly supported diffeomorphisms of $M$. 

If $f:M\to M$ is a $C^r$-self-map of a Riemannian manifold, we define $||f||$ to be 
$||f||=\sup_{x\in M} ||T_xf||$ if it is finite, otherwise we set $||f||=\infty$. 
If both $||f||$ and $||f^{-1}||$ are finite, then, for any $x\in M$,  
$\inf_{x\in M} \inf_{||u||=1}||T_xf(u)||=1/||f^{-1}||>0$.  
It is easy to see that, when this happens, both $f$ and $f^{-1}$ are bi-Lipschitz.  
 Conversely, suppose that $f:M\to M$ is a bi-Lipschitz diffeomorphism. Then so is $f^{-1}$ and   
both $||f||, ||f^{-1}||$ are finite.   When $M$ is compact (with or without boundary), 
$||f||$ is always finite for all $C^r$-diffeomorphisms. 
An example of a diffeomorphism $f:M\to M$ with $||f||<\infty$ but $||f^{-1}||=\infty$ is 
$t\to t^3$ on $M=(0,1)$.   

Denote by $\mathbb R^n_0$ the punctured Euclidean space $\mathbb R^n\setminus \{0\}$.

In the case of $\mathbb R_{0}^n\subset \mathbb R^n$, the metric induced by the 
Riemannian metric is the {\it same} as the restriction of the Euclidean metric, which we shall use to define the 
group $\bilip(\mathbb R^n_0)$.  Note that if $f:\mathbb R^n_0\to \mathbb R^n_0$ 
is a bi-Lipschitz homeomorphism, then $f$ extends to a bi-Lipschitz homeomorphism of $\mathbb R^n$ fixing $0$.   
We shall identify $\bilip(\mathbb R^n_0)$ with the subgroup of 
$\bilip(\mathbb R^n)$ that fixes the origin.

\begin{lemma} \label{embeddingbilipsphere}
We keep the above notations. 
Let $\phi:\mathbb S^{n-1}\to \mathbb S^{n-1}$ be bi-Lipschitz.  
Then so is $\wt{\phi}: \mathbb R^n\to \mathbb R^n$, defined as 
$\wt{\phi}(v)=||v||\phi(v/||v||)~\forall v\in \mathbb R^n_0, \wt{\phi}(0)=0$. 
Moreover, $\phi\mapsto \wt{\phi}$ is a monomorphism {\em $\bilip(\mathbb S^{n-1})\to \bilip(\mathbb R^n)$} and 
$||\wt{\phi}-id||=\infty $ if $\phi$ is nontrivial.
\end{lemma}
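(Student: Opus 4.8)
The plan is to reduce the entire statement to a single Lipschitz estimate for the radial extension, exploiting the fact that $\widetilde{\phi}$ preserves norms and is functorial. First I would record the algebraic identity $\widetilde{\phi\circ\psi}=\widetilde{\phi}\circ\widetilde{\psi}$ together with $\widetilde{\mathrm{id}}=\mathrm{id}$. Since $\|\widetilde{\psi}(v)\|=\|v\|$ and $\widetilde{\psi}(v)/\|\widetilde{\psi}(v)\|=\psi(v/\|v\|)$ for $v\neq 0$, a one-line substitution gives $\widetilde{\phi}(\widetilde{\psi}(v))=\|v\|\,\phi\bigl(\psi(v/\|v\|)\bigr)=\widetilde{\phi\circ\psi}(v)$, and both sides vanish at $0$. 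This already shows $\phi\mapsto\widetilde{\phi}$ is a homomorphism and, once each $\widetilde{\phi}$ is known to be bi-Lipschitz, identifies $(\widetilde{\phi})^{-1}$ with $\widetilde{\phi^{-1}}$. Because $\bilip(\mathbb S^{n-1})$ is the same group whether computed with the geodesic or the chordal (restricted Euclidean) metric, I may fix a chordal bi-Lipschitz constant $\lambda\ge 1$ for $\phi$, so that $\|\phi(x)-\phi(y)\|\le\lambda\|x-y\|$ for all $x,y\in\mathbb S^{n-1}$.

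The heart of the argument is the upper Lipschitz bound for $\widetilde{\phi}$ on $\mathbb R^n$. Writing $v=ru$ and $w=st$ with $r=\|v\|$, $s=\|w\|$ and $u,t\in\mathbb S^{n-1}$, I would split
\[
\widetilde{\phi}(v)-\widetilde{\phi}(w)=r\bigl(\phi(u)-\phi(t)\bigr)+(r-s)\phi(t),
\]
and, using $\|\phi(t)\|=1$ and the Lipschitz bound for $\phi$, estimate $\|\widetilde{\phi}(v)-\widetilde{\phi}(w)\|\le \lambda r\|u-t\|+|r-s|$. It then remains to control the two summands by $\|v-w\|$. The scalar term is immediate from the reverse triangle inequality: $|r-s|=\bigl|\,\|v\|-\|w\|\,\bigr|\le\|v-w\|$. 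The angular term is the crux, because near the origin a small perturbation of $v$ can swing $u$ by a large angle; the point is that this is exactly compensated by the small factor $r$. Concretely I would use the identity $r(u-t)=(v-w)-(r-s)t$, which yields $r\|u-t\|\le\|v-w\|+|r-s|\le 2\|v-w\|$. Combining these gives $\|\widetilde{\phi}(v)-\widetilde{\phi}(w)\|\le(2\lambda+1)\|v-w\|$, a constant depending only on $\lambda$.

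With the upper bound in hand the lower bound comes for free, and this is why the apparent main obstacle dissolves: applying the upper bound to $\phi^{-1}$ (which has the same constant $\lambda$) gives $\|\widetilde{\phi^{-1}}(v)-\widetilde{\phi^{-1}}(w)\|\le(2\lambda+1)\|v-w\|$, and substituting $v=\widetilde{\phi}(p)$, $w=\widetilde{\phi}(q)$ together with $(\widetilde{\phi})^{-1}=\widetilde{\phi^{-1}}$ produces $\|\widetilde{\phi}(p)-\widetilde{\phi}(q)\|\ge (2\lambda+1)^{-1}\|p-q\|$. Thus $\widetilde{\phi}$ is bi-Lipschitz with constant $2\lambda+1$ and fixes $0$, so it lies in $\bilip(\mathbb R^n)$. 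Injectivity is then transparent: if $\widetilde{\phi}=\mathrm{id}$ then $\phi(u)=u$ for every unit vector $u$, whence $\phi=\mathrm{id}$, so $\phi\mapsto\widetilde{\phi}$ is a monomorphism.

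Finally, for the last assertion I read $\|\widetilde{\phi}-\mathrm{id}\|$ as the supremum displacement $\sup_{v}\|\widetilde{\phi}(v)-v\|$, which is the quantity detecting nontriviality of $\widetilde{\phi}$ in $\qi(\mathbb R^n)$ (note that the derivative-norm of the degree-one homogeneous map $\widetilde{\phi}-\mathrm{id}$ would instead be finite). If $\phi$ is nontrivial, I would pick $u_0\in\mathbb S^{n-1}$ with $\phi(u_0)\neq u_0$ and restrict attention to the ray $v=ru_0$, $r>0$, along which $\widetilde{\phi}(v)-v=r\bigl(\phi(u_0)-u_0\bigr)$, so that $\|\widetilde{\phi}(v)-v\|=r\,\|\phi(u_0)-u_0\|\to\infty$ as $r\to\infty$. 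This gives $\|\widetilde{\phi}-\mathrm{id}\|=\infty$ and completes the proof.
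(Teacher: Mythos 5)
Your proof is correct, but the core estimate is obtained by a genuinely different route than the paper's. The paper first handles the equal-norm case ($\|v\|=\|w\|$), then treats unequal norms geometrically: it projects $v$ radially to $v'=v/s$ on the sphere of radius $\|w\|$, and bounds $\|\wt{\phi}(v)-\wt{\phi}(w)\|$ via the triangle inequality through $v'$, invoking two Euclidean facts (that $v'$ is the point of that sphere closest to $v$, and that the angle at $v'$ in the triangle $v',v,w$ is obtuse) to get the sharper bi-Lipschitz constant $\lambda+1$. You instead use the purely algebraic splitting
\[
\wt{\phi}(v)-\wt{\phi}(w)=r\bigl(\phi(u)-\phi(t)\bigr)+(r-s)\phi(t),
\qquad
r(u-t)=(v-w)-(r-s)t,
\]
which needs only the triangle and reverse-triangle inequalities and yields the (worse but harmless) constant $2\lambda+1$. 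What the paper's argument buys is the tighter constant; what yours buys is robustness and brevity --- no geometric side-claims to verify, just identities --- and you also make explicit the homomorphism property $\wt{\phi\circ\psi}=\wt{\phi}\circ\wt{\psi}$, which the paper uses only implicitly when it writes $\wt{\phi^{-1}}=\wt{\phi}^{-1}$ to run the inverse trick for the lower bound (that step, the restriction argument for injectivity, and the ray argument for $\|\wt{\phi}-id\|=\infty$ are the same in both proofs). Your parenthetical disambiguation of $\|\wt{\phi}-id\|$ as supremum displacement rather than a derivative norm is also a worthwhile clarification, since it is the reading the paper's own computation uses. One small point worth a sentence in a final write-up: the polar decomposition $v=ru$, $w=st$ assumes $v,w\neq 0$, so the case where one point is the origin should be noted separately (it is immediate, since $\wt{\phi}$ preserves norms); the paper handles this by appealing to its earlier remark that a bi-Lipschitz homeomorphism of $\mathbb R^n_0$ extends to one of $\mathbb R^n$ fixing $0$.
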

\begin{proof}
Let $\lambda^{-1}||x-y||\le ||\phi(x)-\phi(y)||\le \lambda ||x-y||$ for all $x,y\in \mathbb S^{n-1}$.   Let $v, w\in \mathbb R^n_0$ 
where $||v||=||w||=:r$.  Then $||\wt{\phi}(v)-\wt{\phi}(w)||=||r\phi(v/r)-r\phi(w/r)||=r||\phi(v/r)-\phi(w/r)||\le r\lambda||v/r-w/r||
=\lambda||v-w||$. Similarly $||\wt{\phi}(v)-\wt{\phi}(w)||\ge \lambda^{-1}||v-w||$.  

Suppose that $||v||=s||w||, s> 1$. Set $v':=v/s$ so that $||\wt{\phi}(v')-\wt{\phi}(w)||\le \lambda ||v'-w||$. 
Note that $||\wt{\phi}(v)-\wt{\phi}(v')||=||v-v'||\le ||v-w||$ since $v,v'$ are on the same ray issuing from the origin; the last 
inequality holds because $v'$ is the point closest to $v$ on the sphere $S(0,||w||)$. 
Now  $||\wt{\phi}(v)-\wt{\phi}(w)||\le ||\wt{\phi}(v)-\wt{\phi}(v')||+||\wt{\phi}(v')-\wt{\phi}(w)||\le ||v-v'||+ \lambda||v'-w||
\le ||v-w||+\lambda||v-w||=(\lambda+1)||v-w||$, where the last inequality holds since in the triangle with vertices 
$v', v,w$, the angle at $v'$ is obtuse.  An entirely similar argument applies for $\wt{\phi^{-1}}=\wt{\phi}^{-1}$ and so 
$\wt{\phi}|_{\mathbb R^n_0}$ is bi-Lipschitz with bi-Lipschitz constant $(1+\lambda)$.  As observed 
already, this implies that $\wt{\phi}$ is bi-Lipschitz.  

Since $\wt{\phi}|_{\mathbb S^{n-1}}=\phi$ we see that $\phi\mapsto \wt{\phi}$ is a monomorphism.
As for the last statement, choose $x\in \mathbb S^{n-1}$ such that $\phi(x)\ne x$.  Then $||\wt{\phi}(rx)-rx||=r||\phi(x)-x||
\to \infty$ as $r\to \infty$ and so $||\wt{\phi}-id||=\infty$.
\end{proof}

We have the following corollary.  We denote by $\pl(\mathbb S^{n-1})$ the group of PL-homeomorphisms of 
the sphere with its standard PL-structure. 

\begin{corollary} \label{embeddiffsphere}
(i) The map $\phi\mapsto \wt{\phi}$ defines a monomorphism {\em $\diff^{\,r}(\mathbb S^{n-1})\to 
\diff^{\,r}_\ub(\mathbb R^n_0)$,} 
and, moreover, $||\wt{\phi}-id||=\infty$ if $\phi$ is non-trivial. \\
(ii)  The map $\phi\mapsto \wt{\phi}$ defines a monomorphism {\em $\pl(\mathbb S^{n-1})\to \bilip(\mathbb R^n)$} 
where $||\wt{\phi}-id||=\infty$ if $\phi$ is non-trivial.
\hfill $\Box$
\end{corollary}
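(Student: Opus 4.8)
The plan is to deduce both parts directly from Lemma~\ref{embeddingbilipsphere}, the only substantive work being the uniform bound on the differential in part (i); everything else is elementary regularity bookkeeping. For part (i), I would first verify that $\wt\phi$ is a $C^r$-diffeomorphism of $\mathbb R^n_0$. Away from the origin the maps $v\mapsto \|v\|$ and $v\mapsto v/\|v\|$ are $C^\infty$, so $\wt\phi(v)=\|v\|\,\phi(v/\|v\|)$ is $C^r$ as the product of the scalar $\|v\|$ with the $C^r$ composite $\phi\circ(v/\|v\|)$. Since $\wt\phi^{-1}=\wt{\phi^{-1}}$ by the homomorphism property established in the lemma, and since $\phi^{-1}$ is again a $C^r$-diffeomorphism of $\mathbb S^{n-1}$, the identical argument shows $\wt\phi^{-1}$ is $C^r$. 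Hence $\wt\phi\in\diff^{\,r}(\mathbb R^n_0)$.

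The heart of part (i) is that $\wt\phi\in\diff^{\,r}_\ub(\mathbb R^n_0)$, i.e. that $\|\wt\phi\|$ and $\|\wt\phi^{-1}\|$ are finite. Here I would exploit the radial homogeneity $\wt\phi(tv)=t\,\wt\phi(v)$ for $t>0$, which is immediate from the defining formula. Differentiating in $v$ yields $T_{tv}\wt\phi=T_v\wt\phi$, so the Jacobian of $\wt\phi$ is constant along each ray from the origin. Consequently the continuous function $v\mapsto\|T_v\wt\phi\|$ is constant on rays, hence equals its value at $v/\|v\|$, and its supremum over $\mathbb R^n_0$ is the supremum over the compact sphere $\mathbb S^{n-1}$, which is finite; thus $\|\wt\phi\|<\infty$. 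Applying the same reasoning to $\phi^{-1}$ gives $\|\wt\phi^{-1}\|<\infty$. By the observation recorded in \S2, finiteness of both $\|\wt\phi\|$ and $\|\wt\phi^{-1}\|$ forces $\inf_{x}\inf_{\|u\|=1}\|T_x\wt\phi(u)\|=1/\|\wt\phi^{-1}\|>0$, so taking $\lambda=\max\{\|\wt\phi\|,\|\wt\phi^{-1}\|,2\}$ provides the upper bounds $\|T_x\wt\phi\|,\|T_x\wt\phi^{-1}\|\le\lambda$ and, since the largest singular value dominates the smallest, the matching lower bounds $\|T_x\wt\phi\|,\|T_x\wt\phi^{-1}\|\ge\lambda^{-1}$. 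Hence $\wt\phi\in\diff^{\,r}_\ub(\mathbb R^n_0)$. That $\phi\mapsto\wt\phi$ is a monomorphism and that $\|\wt\phi-id\|=\infty$ for nontrivial $\phi$ are inherited verbatim from Lemma~\ref{embeddingbilipsphere}.

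For part (ii), the only additional ingredient is that a PL-homeomorphism $\phi$ of $\mathbb S^{n-1}$ is bi-Lipschitz. Indeed, $\phi$ is linear on each simplex of a finite triangulation adapted to it, hence Lipschitz on each such compact simplex and therefore Lipschitz on the compact sphere; its inverse $\phi^{-1}$ is again PL, so likewise Lipschitz, whence $\phi\in\bilip(\mathbb S^{n-1})$. Lemma~\ref{embeddingbilipsphere} then immediately yields the monomorphism $\pl(\mathbb S^{n-1})\to\bilip(\mathbb R^n)$ together with the assertion $\|\wt\phi-id\|=\infty$ for nontrivial $\phi$.

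I expect the uniform-boundedness step of part (i) to be the only genuine obstacle. The crucial device is the degree-one homogeneity of $\wt\phi$, which forces the Jacobian to be constant along rays and thereby converts the sup of $\|T_v\wt\phi\|$ over the noncompact domain $\mathbb R^n_0$ into a sup over the compact sphere; all the remaining claims reduce to the lemma or to routine smoothness and singular-value estimates.
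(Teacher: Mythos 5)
Your proposal is correct, and both parts ultimately rest, as the paper intends, on Lemma~\ref{embeddingbilipsphere} together with the standard facts that $C^r$- and PL-homeomorphisms of a compact manifold are bi-Lipschitz. The one place where your route genuinely differs is the step the paper leaves entirely implicit (the corollary carries no written proof): membership of $\wt{\phi}$ in $\diff^{\,r}_\ub(\mathbb R^n_0)$. The paper's shortest path would be: $\wt{\phi}$ is bi-Lipschitz by Lemma~\ref{embeddingbilipsphere}, and by the observation recorded in \S2 a bi-Lipschitz $C^r$-diffeomorphism automatically has $||f||$ and $||f^{-1}||$ finite, which yields the required two-sided bounds on the differentials. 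You instead prove the bounds directly from the degree-one homogeneity $\wt{\phi}(tv)=t\,\wt{\phi}(v)$, which forces $T_{tv}\wt{\phi}=T_v\wt{\phi}$ and converts the supremum of $||T_v\wt{\phi}||$ over the noncompact $\mathbb R^n_0$ into a supremum over the compact sphere; the lower bounds then come from the \S2 identity $\inf_x\inf_{||u||=1}||T_xf(u)||=1/||f^{-1}||$. Your argument is more self-contained (it does not invoke the implication ``bi-Lipschitz diffeomorphism $\Rightarrow$ bounded differentials'' and would even give the $\diff_\ub$ conclusion without first knowing the bi-Lipschitz constant $1+\lambda$ from the lemma), at the cost of a few extra lines; the paper's implicit route is shorter given its preliminaries. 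Your treatment of (ii) — a PL-homeomorphism is affine on each simplex of a finite triangulation, hence bi-Lipschitz, so the lemma applies — is exactly the justification the paper itself tacitly assumes when it asserts in the proof of Theorem~\ref{main}(i) that elements of $\pl(\mathbb S^{n-1})$ are bi-Lipschitz.
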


\subsection{PL-homeomorphisms of $\mathbb R^n$}\label{plhomeomorphismsofRn}
Consider the standard triangulation $\mathcal T_0$ of $\mathbb R^n$ obtained by triangulating each unit cube with vertices 
in $\mathbb Z^n$.  A triangulation $\mathcal T$ of $\mathbb R^n$ is equivalent to $\mathcal T_0$ if there are 
subdivisions $\mathcal S_0$ and $\mathcal S$ of $\mathcal T_0$ and $\mathcal T$ respectively and a simplicial 
isomorphism $\mathcal S_0\to \mathcal S$.  A homeomorphism $f:\mathbb R^n\to \mathbb R^n$ is {\it piecewise linear} 
if there exists triangulations $\mathcal T, \mathcal T'$ which are equivalent to $\mathcal T_0$ and a simplicial 
isomorphism $\phi:(\mathbb R^n, \mathcal T)\to (\mathbb R^n, \mathcal T')$ that realises $f$, i.e., $f=|\phi|$. 

We denote by $\pl(\mathbb R^n)$ the group of all PL-homeomorphisms of $\mathbb R^n.$  Suppose that $f\in \pl(\mathbb R^n)$, realised by a simplicial isomorphism $\phi:(\mathbb R^n,\mathcal T)\to 
(\mathbb R^n,\mathcal T')$.  
Then $f$ is smooth at each interior point of any $n$-dimensional simplex of $\mathcal T$ 
since $f|\sigma$ is affine for any such simplex $\sigma$.  Denote by $T_\sigma f:\mathbb R^n\to\mathbb R^n$ 
the differential $T_pf$ at any interior point $p$ of $\sigma$.  (This depends only on $\sigma$ and not on the 
choice of $p$.)  We define $||f||$ to be $\sup ||T_\sigma f||\in \mathbb R\cup\{ \infty\}$ where the supremum is taken over all 
$n$-dimensional simplices of $\mathcal T$.  Then $||f||$ 
is independent of the simplicial isomorphism $\phi$ that represents $f$.   If $0<||f||<\infty$, then we have $0<||f^{-1}||<\infty$ 
and there exists a $\lambda\ge 1$ such that $\lambda^{-1}\le ||f||, ||f^{-1}||\le \lambda$. Moreover, there is such a 
$\lambda\ge 1$ if and only if $f, f^{-1}$ are bi-Lipschitz with bi-Lipschitz constant $\lambda$. 
We define $\pl_b (\mathbb R^n)$ 
to be the subgroup of bi-Lipschitz PL-homeomorphisms of $\mathbb R^n$.  The group $\pl_\kappa(\mathbb R^n)$ of all 
compactly supported PL-homeomorphisms is contained 
$\pl_b(\mathbb R^n)$.  

The unit (open) ball $\mathbb B^n:=B(0,1)$ has a natural PL-structure and one has a PL-homeomorphism $\pi:\mathbb R^n\to \mathbb B^n$. 
One may choose any diffeomorphism $\pi: \mathbb R^n\to \mathbb B^n$ and transport the PL-structure on $\mathbb R^n$ to 
$\mathbb B^n$.  
Then $\pi $ induces an isomorphism $\pl(\mathbb R^n)\to \pl(\mathbb B^n)$ defined as $f\mapsto \pi f\pi^{-1}$.  
Under this isomorphism $\pl_\kappa (\mathbb R^n)$ gets mapped onto $\pl_\kappa (\mathbb B^n)$.  On the other hand, 
any compactly supported (piecewise linear) homeomorphism of $\mathbb B^n$ extends uniquely to a (piecewise linear) homeomorphism 
of $\mathbb R^n$ supported in $\mathbb B^n$.  Thus $\pl_\kappa(\mathbb B^n)$ is naturally a subgroup of $\pl_\kappa(\mathbb R^n)$.

\subsection{Embeddings of $\bilip(\mathbb D^n,\mathbb S^{n-1})$ into $\bilip (\mathbb R^n)$}
For any integer $k\ge 0$, 
let $G_k=\homeo(D_k,\mathbb \partial D_k)$ where $D_k\subset \mathbb R^n$ is the closed unit disk centred at $ke_1$. 
Thus $D_0=\mathbb D^n$ and $G_0=\homeo(\mathbb D^n,\mathbb S^{n-1})$. The group 
$G_k$ will be regarded as a subgroup of $\homeo_\kappa(\mathbb R^n)$ whenever it is convenient to do so. 
Note that 
$\Phi_k:G_0\to G_k$, defined as $f\mapsto \tau_kf\tau_k^{-1}$ where $\tau_k$ is the translation $v\mapsto v+ke_1$ 
is an isomorphism of groups.  Also, if $h_j\in G_{2j}, j\ge 0$, then  $h_k\circ h_l=h_l\circ h_k$ whenever $k\ne l$ and 
the infinite composition $h_0\circ h_1\circ h_2 \circ \cdots $ is a well-defined homeomorphism of $\mathbb R^n$ 
whose support is contained in $\cup_{j\ge 0} D_{2j}$.  This element will be denoted more briefly as $\prod_{j\ge 0} h_j$.  
Explicitly \[\prod_{j\ge 0}h_j(v)= \left\{ \begin{array}{l l} 
h_j(v), & v\in D_{2j},\\
v, &v\notin \cup_{j\ge 0} D_{2j}.\\
\end{array}
\right. 
\]
  
One has an embedding $\Phi:G_0^\omega\cong \prod_{k\ge 0} G_{2k}\hookrightarrow \homeo(\mathbb R^n)$ defined as 
\[\Phi((g_j))= \prod_{j\ge 0} \Phi_{2j}(g_j).\]
for $(g_j)\in G_0^\omega$.

 Let $\delta:G_0\to G_0^\omega$ be the diagonal embedding.  
 We note that if $H\subset G_0$ is a group of bi-Lipschitz homeomorphisms,  
 then $\Phi(\delta(H))\subset \bilip(\mathbb R^n).$ 
 If $H\subset G_0\cap \diff^{\,r}_\kappa(\mathbb B^n)$, then $\Phi(\delta(H))\subset 
 \diff^{\,r}(\mathbb R^n)$.  Any {\it compactly supported} PL-homeomorphism of $\mathbb B^n$ extends to 
$\bilip(\mathbb D^n,\mathbb S^{n-1})$.  Thus $\pl_\kappa( \mathbb B^n)$ embeds in 
$G_0$.  It is clear that $\Phi(\delta(\pl_\kappa(\mathbb B^n)))\hookrightarrow \pl_b(\mathbb R^n)$.   

In general $\Phi((g_j))$ is not bi-Lipschitz even if $g_j\in G_j$ is bi-Lipschitz for every $j$.
However, if the $g_j, j\ge 0,$ are {\it uniformly bi-Lipschitz}, that is, if  there exists a $\lambda\ge 1$ such that $\lambda^{-1}||x-y||\le ||g_j(x)-g_j(y)||\le \lambda ||x-y|| ~\forall x,y\in \mathbb R^n$ for 
{\it every} $j\ge 0$, then $\Phi((g_j))$ is bi-Lipschitz.  
 
For any $(g_j)\in G_0^\omega$, the homeomorphism $\Phi((g_j))$ is quasi-isometrically equivalent to the identity since 
$\fix(\Phi((g_j)))$ is $1$-dense in $\mathbb R^n$.    
So we modify $\Phi:G_0^\omega\to \homeo(\mathbb R^n)$.  

Let $\rho_j:\mathbb R^n\to \mathbb R^n$ be defined as 
$\rho_j(v)=4^je_1+2^{j}v, j\ge 1$.   Then $\rho_j(\mathbb D^n)=D(4^je_1, 2^{j})=:C_j$.  It is convenient to set $C_0:=\mathbb D^n$ and $\rho_0=id_{\mathbb R^n}$.   
We note that $C_i\cap C_j=\emptyset$ if $i>j\ge 0$.   If $g\in G_0$, then $\rho_jg\rho_j^{-1}$ has support in $C_j$ 
and so $\rho_i g\rho_i^{-1}$ and $\rho_j g\rho_j^{-1}$ commute if $i\ne j$.  
Therefore $g\mapsto \prod_{j\ge 0}\rho_j
g\rho_j^{-1}$ is a well-defined homomorphism $\Psi:G_0\to \homeo (\mathbb R^n)$.  It is evident that 
$\Psi$ is a monomorphism since $\Psi(g)|_{\mathbb D^n}=g$.

\begin{lemma}\label{embeddingofG0}
With the above notations, let $g\in G_0$.  Then the following statements hold:\\ (i) if $g\in G_0$ is non-trivial, 
$||\Psi(g)-id||=\infty$. 
(ii) if {\em $g\in \bilip(\mathbb D^n,\mathbb S^{n-1})$,} then {\em $\Psi(g)\in \bilip(\mathbb R^n)$,}
(iii) if {\em $g\in \pl_\kappa(\mathbb B)\subset \homeo(\mathbb D,\mathbb S^{n-1})$,} then {\em $\Psi(g)\in \pl_b(\mathbb R^n)$.}

\end{lemma}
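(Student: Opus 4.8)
The plan is to reduce all three statements to the single geometric fact that each $\rho_j$ is a Euclidean similarity of ratio $2^j$, so that $\Psi(g)$ agrees with the rescaled copy $\rho_j g\rho_j^{-1}$ on the disk $C_j$, equals the identity on $\mathbb R^n\setminus\bigcup_j\mathrm{int}(C_j)$, and fixes every boundary sphere $\partial C_j$ pointwise (because $g$ fixes $\mathbb S^{n-1}$ pointwise). Throughout I use that the $C_j$ are pairwise disjoint and that the family $\{C_j\}$ is locally finite, i.e.\ any bounded set meets only finitely many $C_j$; the latter holds since the centres $4^je_1$ escape to infinity faster than the radii $2^j$ grow. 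For (i) I would pick $x\in\mathbb D^n$ with $g(x)\ne x$ and evaluate $\Psi(g)$ at $\rho_j(x)\in C_j$. As $\Psi(g)(\rho_j(x))=\rho_j(g(x))$, the displacement is $\|\Psi(g)(\rho_j(x))-\rho_j(x)\|=2^j\|g(x)-x\|$, which tends to $\infty$ as $j\to\infty$; hence $\|\Psi(g)-id\|=\infty$.

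For (ii) the key observation is that conjugation by a similarity leaves the bi-Lipschitz constant unchanged: if $g$ is $\lambda$-bi-Lipschitz, then so is $\rho_j g\rho_j^{-1}$ on $C_j$, with the \emph{same} $\lambda$, for every $j$, so the pieces of $\Psi(g)$ are uniformly bi-Lipschitz in the sense recorded before the lemma. It then remains to upgrade this to a global bound. Given $x,y\in\mathbb R^n$, I would consider the straight segment $[x,y]$: by convexity it meets each $C_j$ in a single chord, and by local finiteness it meets only finitely many of them, so the crossing points subdivide it into finitely many sub-segments, each lying in one disk or in the fixed region. Every interior subdivision point lies on some $\partial C_j$ and is therefore fixed by $\Psi(g)$; consequently on a sub-segment inside a disk the two endpoints are displaced by a factor at most $\lambda$, while on a sub-segment in the fixed region they are not moved at all. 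Summing with the triangle inequality and using that the subdivision points are collinear gives $\|\Psi(g)(x)-\Psi(g)(y)\|\le\lambda\|x-y\|$; applying the same estimate to $\Psi(g)^{-1}=\Psi(g^{-1})$, with $g^{-1}$ again $\lambda$-bi-Lipschitz, yields the matching lower bound, so $\Psi(g)\in\bilip(\mathbb R^n)$.

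For (iii) I would note first that a compactly supported PL-homeomorphism of $\mathbb B^n$ is bi-Lipschitz, so part (ii) already gives $\Psi(g)\in\bilip(\mathbb R^n)$; it remains to see that $\Psi(g)$ is PL. Each $\rho_j g\rho_j^{-1}$ is PL, being the conjugate of the PL map $g$ by the similarity $\rho_j$, and is supported in a compact subset of $\mathrm{int}(C_j)$, equalling the identity near $\partial C_j$. By local finiteness these refinements assemble into a single locally finite triangulation of $\mathbb R^n$ that agrees with $\mathcal T_0$ off the disks and with respect to which $\Psi(g)$ is simplicial; this triangulation is equivalent to $\mathcal T_0$, whence $\Psi(g)\in\pl(\mathbb R^n)$ and therefore $\Psi(g)\in\pl_b(\mathbb R^n)$.

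I expect the main obstacle to be the estimate in (ii): one must combine convexity (one chord per disk), local finiteness (finitely many crossings), and, crucially, the boundary-fixing property of $g$ (so that the crossing points are fixed by $\Psi(g)$) in order to convert the uniform per-disk bi-Lipschitz bound into a genuine global one. The construction of the global triangulation in (iii) is routine by comparison, the only point needing care being that local finiteness makes the piecewise refinement an honest triangulation equivalent to $\mathcal T_0$.
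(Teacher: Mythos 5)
Your proposal is correct and follows essentially the same route as the paper's proof: evaluating at $\rho_j(x)$ for part (i), using that conjugation by the similarity $\rho_j$ preserves the bi-Lipschitz constant together with the fixed crossing points on the $\partial C_j$ and collinearity for the upper bound in (ii) (the paper likewise reduces the lower bound to the inverse, via ``similarly''), and conjugating the PL structure by the affine maps $\rho_j$ in (iii). The only cosmetic difference is that you subdivide the segment at all disk crossings while the paper uses just the two crossing points nearest the endpoints, which suffices since those points are fixed by $\Psi(g)$.
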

\begin{proof}
(i) Choose $x_0\in \mathbb B$ such that $g(x_0)\ne x_0$.  If $N\ge 1$ is any integer, choose an integer $k$ so large that 
$2^k||g(x_0)-x_0||>N$.  Then $||\Psi(g)(2^kx_0+4^ke_1)-2^kx_0-4^ke_1||=||\rho_kg(x_0)-2^kx_0-4^ke_1||
=||2^kg(x_0)-2^kx_0||>N$.  This proves (i).

(ii) Suppose that $\lambda \ge 1$ be a bi-Lipschitz constant for $g$.   We claim that 
$\lambda$ is also a bi-Lipschitz constant for $\psi(g)$.  To see this, first let $x,y\in D(4^je_1, 2^j)$.  Then $x_0:=
\rho_j^{-1}(x)=(x-4^je_1)/2^j\in \mathbb D$ and so  $g(x_0)\in \mathbb D$. Similarly $g(y_0)\in \mathbb D$ where 
$y_0:=\rho_j^{-1}(y)=(y-4^je_1)/2^j$. Therefore 
$\Psi(g)(x)-\psi(g)(y)=\rho_j g(x_0)-\rho_j g(y_0)=2^jg(x_0)-2^jg(y_0)=2^j(g(x_0)-g(y_0))$.  Since $\lambda^{-1}||x_0-y_0||
\le ||g(x_0)-g(y_0)||\le \lambda ||x_0-y_0||$, multiplying throughout by $2^j$ we see that 
$\lambda^{-1} ||x-y||=2^j\lambda^{-1} ||x_0-y_0||\le  ||\Psi(g)(x)-\Psi(g)(y)||\le \lambda 2^j ||x_0-y_0||=\lambda ||x-y||$.  

If $x_1, x_2\in \mathbb R^n$ are fixed by $\Psi(g)$, then, trivially $||\Psi(g)(x_1)-\Psi(g)(x_2)||=||x_0-y_0||$. 
Suppose that $x_0\in C_j, y_0\in C_k, j\ne k.$   The straight line segment joining $x_0, y_0$ meets $\partial C_j$ and 
$\partial C_k$ at unique points $x_1, x_2$ respectively and  we have $\psi(g)(x_i)=x_i, i=1,2$.
So,  $||\Psi(g)(x_0)-\Psi(g)(y_0)||\le ||\Psi(g)(x_0)-\Psi(g)(x_1)||+||\Psi(g)(x_1)-\Psi(g)(x_2)||+||\Psi(g)(x_2)
-\Psi(g)(y_0)||\le \lambda||x_0-x_1||+||x_1-x_2||+\lambda||x_2-y_0||\le \lambda(||x_0-x_1||
+||x_1-x_2||+||x_2-y_0||)=\lambda||x_0-y_0||$.  Similarly, 
$\lambda^{-1}||x_0-y_0||\le ||\Psi(g)(x_0)-\Psi(g)(y_0)||$.

\noindent
(iii)  Let $g\in \pl_\kappa(\mathbb B)\subset \pl_\kappa (\mathbb R^n)$.   
Since $\rho_j$ is affine, $\rho_j g\rho_j^{-1}$ is piecewise linear and it follows that $\Psi(g)$ is also 
piecewise linear.  As $g$ has compact support and is piecewise linear it is bi-Lipschitz. By (ii), it 
follows that $\Psi(g)$ is also bi-Lipschitz. 
\end{proof} 
 
\section{Proof of Theorem \ref{main}}

We are now ready to prove Theorem \ref{main}.   

\noindent
{\it Proof of Theorem \ref{main}}:  Since $\qi(\mathbb Z^n)\cong \qi(\mathbb R^n)$, we need only obtain embeddings into 
$\qi(\mathbb R^n)$.  

(i) One has a well-defined homomorphism $\eta:\bilip(\mathbb R^n)\to \qi(\mathbb R^n)$  defined 
as $f\mapsto [f]$.  The kernel of this homomorphism is the subgroup $\{f\in \bilip(\mathbb R^n)\mid ||f-id||<\infty\}$
By Lemma \ref{embeddingbilipsphere}, we have an embedding 
$\bilip(\mathbb S^{n-1})\to \bilip(\mathbb R^n)$ defined as $\phi \mapsto \wt{\phi}$ where $||\wt{\phi}-id||=\infty$ if $\phi\ne id$. 
Hence the restriction of $\eta$ to $\bilip(\mathbb S^{n-1})$ is a monomorphism.   Since elements of $\diff^{\, r}(\mathbb S^{n-1}), 
 1\le r \le \infty,$ and of $\pl(\mathbb S^{n-1})$ are bi-Lipschitz, it follows that $\diff^{\, r}(\mathbb S^{n-1})$ and $\pl(\mathbb S^{n-1}) $ are subgroups of $\bilip(\mathbb S^{n-1})$. This proves (i). 

(ii) By Lemma \ref{embeddingofG0}, one has an embedding $\Psi:\bilip(\mathbb D^n,\mathbb S^{n-1})\to \bilip(\mathbb R^n)$. 
Since $||\Psi(g)-id||=\infty$ for $g\ne id$, the composition $\eta\circ \Psi: \bilip(\mathbb D^n,\mathbb S^{n-1})\to 
\qi(\mathbb R^n)$ is a monomorphism.  
Since $\mathbb D^n$ is convex, the Riemannian metric on it induced from 
the Euclidean metric on $\mathbb R^n$ is the same as restriction of the Euclidean metric.   In particular the 
any $f\in \diff^{\, r}(\mathbb D^n,\mathbb S^{n-1})$ is bi-Lipschitz.  That is, $\diff^{\, r}(\mathbb D^n,\mathbb S^{n-1})$ is a subgroup 
of $\bilip(\mathbb D^n,\mathbb S^{n-1})$ and our assertion follows.

(iii) We regard $\mathbb R^n$ as $\mathbb R^k\times \mathbb R^l$ where $l=n-k$.  Any set-maps $f:\mathbb R^k\to \mathbb R^k$ 
and $g:\mathbb R^{l}\to \mathbb R^{l}$  
yield a set-map $f\times g:\mathbb R^n\to \mathbb R^n$.  We claim that if $f$ is a 
$(\lambda,\epsilon)$-quasi-isometry and if $g$ is $(\mu,\delta)$-quasi-isometry then so is $h:=f\times g$.  Since the Euclidean metric on $\mathbb R^n$ is bi-Lipschitz equivalent to the metric defined 
by the norm 
$||(x,y)||_1:=||x||+||y||$ where $x\in \mathbb R^k, y\in \mathbb R^{l}$, we need only show that $h$ is a quasi-isometry 
with respect to the metric induced by the $||\cdot||_1$ norm.  Routine verification shows that $h$ is a $(\nu, \epsilon+\delta)$-quasi-isometric 
embedding where $\nu:=\max\{\lambda, \mu\}$.  Also, 
if $\im(f)$ and $\im(g)$ are $C$-dense in $\mathbb R^k$ and $\mathbb R^l$ respectively,  
then $h=f\times g$ is $2C$-dense.   Thus if $f$ and $g$ are quasi-isometry equivalences 
of $\mathbb R^k$ and $\mathbb R^l$ respectively, so is $h$.  

Next, we show that if $f_0, f_1$ are quasi-isometry equivalences and if $||f_0- f_1||<\infty$, then the corresponding maps $h_0:=f_0\times g, h_1:=f_1\times g$ satisfy the condition $||h_0-h_1||<\infty$.   In fact we have 
$||f_0-f_1||=||h_0-h_1||$.  
To see this, let $(x,y)\in \mathbb R^n$.  Then $||h_0(x,y)-h_1(x,y)||_1=||(f_0(x),g(y))-(f_1(x),g(y)) ||_1=||f_0(x)-f_1(x)||$ and so 
$||h_0-h_1||=||f_0-f_1||$.  An entirely analogous argument shows that $f\times g_0\sim f\times g_1$ if 
$g_0\sim g_1$.  It follows that 
we have a well-defined homomorphism 
$\theta: \qi(\mathbb R^k)\times \qi(\mathbb R^l) \to\qi( \mathbb R^n)$ defined as $([f],[g])\mapsto [f\times g]$.  
Moreover, since $||(a,b)-(c,d)||_1=||a-c||+||b-d||$, we have $||f\times g-id_{\mathbb R^n}||=
||f-id_{\mathbb R^k}||+||g- id_{\mathbb R^l}||$.  Therefore 
if $||f-id_{\mathbb R^k}||=\infty$ or if  $||g-id_{\mathbb R^l}||=\infty$, then $||f\times g-id_{\mathbb R^n}||=\infty$.  It follows 
that $\theta$ is a monomorphism.

(iv) It was noted in \S\ref{plhomeomorphismsofRn} that 
$\pl_\kappa(\mathbb R^n)$ embeds in $\pl_\kappa(\mathbb B^n)\subset \bilip (\mathbb B^n)$.   
Using Lemma \ref{embeddingofG0} and arguing as in the proof of (i), we obtain 
that $\pl_\kappa(\mathbb R^n)$ embeds in $\qi(\mathbb R^n)$.  

It remains to consider the groups $\diff^{\, r}_\kappa(\mathbb R^n), 1\le r\le \infty.$   Since 
$\diff^{\, r}_\kappa(\mathbb R^n)\cong \diff^{\, r}_\kappa(\mathbb B^n)$ and since the latter group 
is naturally a subgroup of $\diff^{\, r}(\mathbb D^n,\mathbb S^{n-1})$ the 
assertion follows from (ii).     \hfill $\Box$

 \subsection{Embedding $\diff^{\, r}(V,\partial V)$ in $\qi(\mathbb Z^n)$}
Let $V\subset \mathbb R^n$ be a compact connected $n$-dimensional manifold with (non-empty) boundary 
$\partial V$.  We shall generalize  Theorem \ref{main} and show that $\bilip(V,\partial V)$ embeds in $\qi(\mathbb R^n)$.
First we need to compare two naturally defined metrics on $\partial V$.   
 
Let $M$ be a closed connected smooth submanifold of $\mathbb R^n$ of dimension $n-1$.  We denote the 
induced Riemannian metric on $M$ by $\delta$ and the metric inherited from the Euclidean metric 
on $\mathbb R^n$ by $d$; thus $d(x,y)=||x-y||$ whereas $\delta(x,y)$ is the induced length metric on $M$, i.e., 
$\delta(x,y)=\inf_\sigma l(\sigma)$ where $l(\sigma)$ is the length of a rectifiable arc 
$\sigma:[0,1]\to M$ from $x$ to $y$.      
Since $M$ is compact there is in fact a smooth geodesic $\gamma$ joining $x$ to $y$ so that $\delta(x,y)=l(\gamma)$.
We have the following lemma. 

\begin{lemma} \label{v} Let $M$ be a closed connected smooth $n-1$-dimensional submanifold of $\mathbb R^n$
With notations as above, $(M,\delta)$ and $(M,d)$ are bi-Lipschitz equivalent. 
\end{lemma}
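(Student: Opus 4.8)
The plan is to control the ratio $\delta(x,y)/d(x,y)$ by splitting $M\times M$ into a neighbourhood of the diagonal, where the two metrics are nearly equal, and its complement, where compactness takes over. One inequality comes for free: any rectifiable arc $\sigma$ in $M$ from $x$ to $y$ is also an arc in $\mathbb{R}^n$, and the straight segment is the shortest arc in $\mathbb{R}^n$, so $d(x,y)=\|x-y\|\le l(\sigma)$ for every competitor $\sigma$ and hence $d(x,y)\le\delta(x,y)$ for all $x,y\in M$. This already gives $(1/\lambda)d\le\delta$ for any $\lambda\ge 1$, so the whole content of the lemma is the reverse estimate $\delta\le\lambda d$.

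For the reverse estimate I would first work locally. Around each $p\in M$ I choose Euclidean coordinates in which $T_pM$ is horizontal; since $M$ is a smooth $(n-1)$-submanifold, there is an open ball $W_p\subset\mathbb{R}^n$ centred at $p$, taken small enough that $M\cap W_p$ is exactly the graph $\{(u,f(u)):u\in V_p\}$ of a $C^1$ function $f$ over a convex (round) domain $V_p\subset\mathbb{R}^{n-1}$, with no other sheet of $M$ entering $W_p$. Setting $C_p:=\sup_{V_p}\|\nabla f\|<\infty$, for $x=(u_1,f(u_1))$ and $y=(u_2,f(u_2))$ in this chart the graph over the straight segment $t\mapsto(1-t)u_1+tu_2$ stays in $M\cap W_p$ (here convexity of $V_p$ is used) and has $\mathbb{R}^n$-length at most $\sqrt{1+C_p^2}\,\|u_1-u_2\|$. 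Since $\|u_1-u_2\|\le\|x-y\|=d(x,y)$, this yields the local bound $\delta(x,y)\le\sqrt{1+C_p^2}\,d(x,y)$.

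To make this uniform and to handle far-apart points I would invoke compactness twice. The sets $\{M\cap W_p\}_{p\in M}$ form an open cover of the compact space $(M,d)$; I extract a finite subcover $W_{p_1},\dots,W_{p_N}$, let $\epsilon_0>0$ be a Lebesgue number for it, and put $\lambda_0:=\max_i\sqrt{1+C_{p_i}^2}$. If $d(x,y)<\epsilon_0$ then $\{x,y\}$ lies in a single $M\cap W_{p_i}$, so the local bound gives $\delta(x,y)\le\lambda_0\,d(x,y)$. If instead $d(x,y)\ge\epsilon_0$, then because $M$ is compact and connected the geodesic diameter $D:=\sup_{x,y}\delta(x,y)$ is finite, so $\delta(x,y)\le D\le(D/\epsilon_0)\,d(x,y)$. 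Taking $\lambda:=\max\{1,\lambda_0,D/\epsilon_0\}$ and combining with the first paragraph gives $(1/\lambda)\,d(x,y)\le\delta(x,y)\le\lambda\,d(x,y)$ for all $x,y\in M$, which is the asserted bi-Lipschitz equivalence.

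The step I expect to be delicate is arranging the charts so that each $M\cap W_p$ is a single graph with no stray sheet; this is precisely what prevents two points that are Euclidean-close but far apart along $M$ from wrecking the bound. The pleasant point is that no separate estimate on the reach of $M$ is needed: once the cover consists of such single-sheet charts, the Lebesgue number $\epsilon_0$ automatically supplies the required positive separation, since $d(x,y)<\epsilon_0$ forces $x$ and $y$ into a common single-sheet chart and hence onto the same local graph.
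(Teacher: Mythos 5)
Your proof is correct and follows essentially the same route as the paper's: the trivial bound $d\le\delta$, a local estimate $\delta\le\sqrt{1+C^2}\,d$ on single-sheet graph charts via the gradient bound on the graphing function, a Lebesgue-number/compactness argument to make the local constant uniform, and the diameter bound $\delta\le D\le (D/\epsilon_0)d$ for points at Euclidean distance at least $\epsilon_0$. Your explicit insistence that each chart contain no stray sheet of $M$ is a point the paper passes over silently, and it is indeed what makes the Lebesgue-number step valid.
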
   
\begin{proof}
Let $x,y\in M$.  Then it is obvious that $\delta(x,y)\ge ||x-y||$.  It suffices to show 
the existence of a $\lambda\ge 1$ such that $\delta(x,y)\le \lambda ||x-y||$.  
Fix an $\epsilon>0$.   Suppose that $||x-y||\ge \epsilon$.  Let $D$ be the diameter of $(M,\delta)$.  Then, 
$\delta(x,y)\le D\le D ||x-y||/\epsilon$.  

Fix $x_0\in M$.  We shall show the existence of an $\epsilon=\epsilon(x_0)>0$ and a $\lambda_1=\lambda_1(x_0,\epsilon)\ge 1$ 
such that $\delta(x, y)\le \lambda_1||x-y||$ whenever $x,y\in B_d(x_0,\epsilon)$.

Choose an open neighbourhood $U\subset M$ of $x_0$ such that $U\subset V\times J\subset \mathbb R^n$ is the graph of a smooth function 
$f:V\to J$ where $V$ is an open ball in $\mathbb R^{n-1}$ of radius $r>0$, centred at $0$, and $J$ is an open interval containing the origin. 
We assume, as we may, that $f(0)=0$ so that $x_0\in M$ is the origin of $\mathbb R^n$.   
Let $a_0,a_1\in V$ and write $a(t)=(1-t)a_0+ta_1$.
Then $\sigma(t)
=(a(t), f(a(t))), 0\le t\le 1,$ is a smooth arc in $M$ joining $x=(a_0, f(a_0)), y=(a_1,f(a_1))\in M$.  We have  
$l(\sigma)=\int_{0}^1||\sigma'(t)||dt = \int_{0}^1\sqrt{||a'(t)||^2+f'(a(t))^2}dt=\int_0^1\sqrt{||a_1-a_0||^2+f'(a(t))^2}$. 
Since, by chain rule, $f'(a(t))=\grad(f)|_{a(t)}\cdot a'(t)=\grad f|_{a(t)}\cdot (a_1-a_0)$, we see that 
$|f'(a(t))|\le ||\grad(f)|_{a(t)}|| \cdot ||a_1-a_0||$.  
Let $c:=\sup_{v\in D(x_0,r/2)\cap V} ||\grad(f)|_{v}||$ and let $\lambda_1=\sqrt{1+c^2}$.  Set $\epsilon=\epsilon(x_0):=r/2$. 
Then, for any $x,y\in M\cap B(x_0,\epsilon)$,  
we have  $\delta(x,y)\le l(\sigma)\le \lambda_1 ||a_1-a_0||\le \lambda_1||x-y||$.  

Finally, by Lebesgue number lemma, there exists a number $\eta>0$ and {\it finitely} many open 
balls $B_d(x_1,\epsilon_1)\cap M,\ldots, B_d(x_k,\epsilon_k)\cap M$ which cover $M$  
such that (i) if $x,y\in M, ||x-y||<\eta$, then  $x,y\in B(x_j,\epsilon_j)$ for some $j\le k$, and, (ii) if $x,y\in B(x_j,\epsilon_j)$ 
then $\delta(x,y)\le \lambda_1(x_j,\epsilon)||x-y||$.  Set $K:=\max_{1\le j\le k} 
\lambda_1(x_j, \epsilon_j)$.  Then, for any $x,y\in M$ with $||x-y||<\eta,$ we have $\delta(x,y)\le K||x-y||$.  
Taking $\lambda= \max\{K, D/\eta\}$ we have $\delta(x,y)\le \lambda ||x-y||$ for 
all $x,y\in M$.  This completes the proof. 
\end{proof}

We have the following corollary.
\begin{corollary}\label{bilipmetricsonv}
Let $V$ be any compact connected smooth $n$-dimensional submanifold of $\mathbb R^n$.  Then 
induced Riemannian metric $\delta_V$ and Euclidean metric $d$ on $V$ are bi-Lipschitz equivalent. 
Consequently {\em $\bilip(V,\delta_V)=\bilip(V, d)$.}
\end{corollary}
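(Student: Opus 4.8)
The plan is to mimic the proof of Lemma \ref{v}, replacing the graph of a hypersurface by the \emph{epigraph} description of $V$ near a boundary point. As there, one inequality is free: any rectifiable arc in $V$ joining $x$ to $y$ has length at least $\|x-y\|$, so $\delta_V(x,y)\ge d(x,y)=\|x-y\|$ for all $x,y\in V$. It therefore suffices to produce a single $\lambda\ge 1$ with $\delta_V(x,y)\le \lambda\|x-y\|$. Since $V$ is compact and connected it is path connected, and its intrinsic diameter $D:=\sup_{x,y\in V}\delta_V(x,y)$ is finite; hence, once an $\eta>0$ is fixed, the far-apart case $\|x-y\|\ge \eta$ is immediate from $\delta_V(x,y)\le D\le (D/\eta)\|x-y\|$. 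All the work is in the local estimate for nearby points.

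Next I would establish a local Lipschitz bound near each $x_0\in V$. If $x_0$ lies in the interior $V\setminus\partial V$, I choose $\epsilon>0$ with $B(x_0,\epsilon)\subset V$; then for $x,y\in B(x_0,\epsilon)$ the straight segment from $x$ to $y$ stays in $V$, so $\delta_V(x,y)=\|x-y\|$ and the local constant is $1$. If $x_0\in\partial V$, I choose Euclidean coordinates so that, in a neighbourhood, $\partial V$ is the graph $t=f(a)$ of a smooth function $f$ on an open ball $W\subset\mathbb R^{n-1}$ with $f(0)=0$ and $x_0$ the origin, while $V$ itself is the epigraph $\{(a,t)\mid a\in W,\ t\ge f(a)\}$. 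Setting $c:=\sup_a\|\grad f|_a\|$ over a slightly smaller ball and $\lambda_1:=\sqrt{1+c^2}$, the key construction is a short arc joining two nearby epigraph points $x=(a_0,s_0)$, $y=(a_1,s_1)$. With $a(t)=(1-t)a_0+ta_1$ and $L(t)=(1-t)s_0+ts_1$, I take
\[
\sigma(t)=\bigl(a(t),\ \max\{L(t),\,f(a(t))\}\bigr),\qquad 0\le t\le 1 .
\]
This piecewise-smooth arc stays in the epigraph (its height is $\ge f(a(t))$ by construction) and joins $x$ to $y$ since $s_i\ge f(a_i)$. Where $L(t)\ge f(a(t))$ its height derivative is $s_1-s_0$, and on the complementary set it is $\grad f|_{a(t)}\cdot(a_1-a_0)$, of modulus at most $c\,\|a_1-a_0\|$; hence the speed satisfies $\|\sigma'(t)\|^2=\|a_1-a_0\|^2+\ell'(t)^2\le (1+c^2)\bigl(\|a_1-a_0\|^2+(s_1-s_0)^2\bigr)=(1+c^2)\|x-y\|^2$, so $\delta_V(x,y)\le l(\sigma)\le \lambda_1\|x-y\|$. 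One has only to check that for $x,y$ in a small enough $B(x_0,\epsilon)$ the whole arc remains inside the chart, which holds because both $a(t)$ and the height stay within $O(\epsilon)$ of the origin.

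Finally, I would patch these local bounds exactly as in Lemma \ref{v}: the neighbourhoods above form an open cover of the compact set $V$, so the Lebesgue number lemma yields an $\eta>0$ and finitely many of them whose associated constants have maximum $K$, giving $\delta_V(x,y)\le K\|x-y\|$ whenever $\|x-y\|<\eta$. Taking $\lambda=\max\{K,D/\eta\}$ proves that $\delta_V$ and $d$ are bi-Lipschitz equivalent, and the asserted identity $\bilip(V,\delta_V)=\bilip(V,d)$ is then immediate from the observation in the Preliminaries that bi-Lipschitz equivalent metrics determine the same bi-Lipschitz homeomorphism group. The one genuine obstacle is the boundary case: because $V$ may be non-convex the Euclidean segment between two nearby points can leave $V$, and the ``cap off at the graph'' arc $\sigma$ is exactly the device that yields a nearby in-$V$ path of controlled length.
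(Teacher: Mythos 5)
Your proof is correct, but it takes a genuinely different route from the paper's. The paper does not redo any local analysis for the corollary: it invokes Lemma \ref{v} for each closed component $M$ of $\partial V$, giving a uniform constant $\lambda_1$ with $\delta_M(x,y)\le \lambda_1\|x-y\|$, and then argues \emph{globally} for an arbitrary pair $x_0,y_0\in V$ by partitioning the straight Euclidean segment $[x_0,y_0]$ at points of $\partial V$ so that each sub-segment either lies in $V$ (where $\delta_V$ equals Euclidean length) or has both endpoints on the same component of $\partial V$ (where the boundary estimate applies); since the subdivision points are collinear, summing gives $\delta_V(x_0,y_0)\le\lambda_1\|x_0-y_0\|$. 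You instead work \emph{locally}: interior points are handled by a ball inside $V$, boundary points by the epigraph normal form together with the capped-off arc $\sigma(t)=\bigl(a(t),\max\{L(t),f(a(t))\}\bigr)$, and the estimates are patched by the Lebesgue number lemma --- in effect you re-run the proof of Lemma \ref{v} adapted to a manifold with boundary rather than citing its statement. What each buys: the paper's route is shorter locally, but its global step quietly relies on two non-trivial facts --- that a suitable finite partition exists even though the segment can meet $\partial V$ in an infinite set, and that a maximal open sub-segment of $[x_0,y_0]$ lying outside $V$ has both endpoints on the \emph{same} component of $\partial V$ (true, but a separation-theoretic claim the paper asserts without proof). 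Your route avoids both issues and never uses finiteness of the set of boundary components; its costs are the (standard) local description of $V$ as an epigraph and a small technical point you should phrase carefully: the height $\max\{L,f\circ a\}$ is only Lipschitz, and the set where $L=f\circ a$ need not be nice, so the speed bound $\|\sigma'(t)\|^2\le(1+c^2)\|x-y\|^2$ should be stated as an almost-everywhere estimate for a Lipschitz (hence rectifiable) curve rather than for a piecewise-smooth one; this does not affect the length inequality or the conclusion.
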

\begin{proof}
Clearly $\delta_V(x,y)\ge ||x-y||$ for any $x,y\in V$.  In fact, equality holds if the straight line segment 
joining $x,y$ is entirely contained in $V$. In particular, if $x_0$ is an interior point of $V$, for some $\epsilon>0$ 
we have $ B_d(x_0,\epsilon)\subset V$ and $\delta_V(x_0,y)=||x_0-y||$ for all $y\in B_d(x_0, \epsilon)$.  

By the above lemma, there exists a $\lambda_1\ge 1$ such that for any component $M$ of $\partial V$, we have 
$\delta_M(x,y)\le  \lambda_1 ||x-y||~\forall x,y\in M$. (Here $\delta_M$ denotes the induced Riemannian metric 
on $M$.) If $x,y \in M$, a component  
of $\partial V$, then $\delta_V(x,y)\le \delta_M(x,y)\le \lambda_1 ||x-y||~\forall x,y\in M.$ 
 
Since $V$ is compact, there are only finitely many components of $\partial V$ and so there exists an $\eta>0$ such that 
if $x,y\in \partial V$ and $||x-y||<\eta$, then $x,y$ belong to the same component of $\partial V$.   
 
Let $x_0,y_0 \in V$. In general, the straight line segment joining $x_0,y_0$ can possibly meet $\partial V$ at infinitely many points.  
However, we can always find a finite partition of the line segment such that (i) the points of subdivision are in $\partial V$,
(ii) either the straight line segment joining successive points $x_i,x_{i+1}$ of subdivisions meet $V$ only in $\partial V$ or 
$||x_i-x_{i+1}||<\eta$.
More precisely, let 
$\sigma:[0,1]\to \mathbb R^n$ be defined as $t\mapsto (1-t) x_0+ty_0$.   
We choose a partition $t_0=0<t_1\le \cdots\le t_{k-1}<t_k=1$ 
of the interval $[0,1]$ satisfying the following two conditions.\\  (i) 
$x_j:=\sigma(t_j)$ belong to $\partial V, 0<j<k$. \\
(ii) Let $x_k:=y_0$ and let $0\le i<k$. Then either (a) the straight-line segment 
$[x_i,x_{i+1}]:=\sigma([t_i,t_{i+1}])$ meets $\partial V$ only at the end points,   
or, (b) $||x_{i+1}-x_i||<\eta$. 

If $[x_i,x_{i+1}]\subset V$, then 
$\delta_V(x_i,x_{i+1})=||x_i-x_{i+1}||$. If $[x_i, x_{i+1}]$ is not contained in $V$,  then $x_i,x_{i+1}$ belong 
to the {\it same} component---call it $M$---of $\partial V$. Therefore 
$\delta_V(x_i,x_{i+1})\le \delta_M(x_i,x_{i+1})\le \lambda_1||x_i-x_{i+1}||$. 

If $||x_i-x_{i+1}||<\eta$, 
then again $x_i,x_{i+1}\in\partial V$ are in the same component of $\partial V$ by our choice of $\eta$ and we conclude that 
$\delta_V(x_i,x_{i+1})\le \lambda_1||x_i-x_{i+1}||$.

Hence for any $x_0,y_0\in V$ we have  $\delta_V(x_0,y_0)\le \sum_{0\le j<k} \delta_V(x_{i+1},x_i)\le \sum \lambda_1||x_{i+1}-x_{i}||
=\lambda_1||x_{k}-x_{0}||=\lambda_1||x_0-y_0||.$  
\end{proof}

Let $V\subset \mathbb R^n$ be a smooth compact $n$-dimensional manifold with boundary.    
Choose $r>0$ so large that $V$ is contained in $B(0,r)$.  
Then $\diff^{\,r}(V,\partial V)$ 
naturally embeds in $\homeo(D(0,r))$ by extending each diffeomorphism to be identity outside $V$.    
Under this embedding, $\diff^{\,r}_\kappa(V\setminus \partial V)$ actually has image 
contained in $\diff_\kappa^{\,r}(B(0,r))$. In any case, as each element of $\diff^{\,r}(V,\partial V)$ bi-Lipschitz, 
so is its extension to $D(0,r)$. Thus $\diff^{\, r}(V,\partial V)$ is naturally a subgroup of $\bilip(D(0,r))$
We obtain the following as 
an immediate consequence of Lemma \ref{embeddingofG0} and Theorem \ref{main}(iii).
 
\begin{theorem} \label{vbdryv}Let $V\subset \mathbb R^n$ be a connected compact $n$-dimensional manifold with boundary.
There exists an embedding {\em $\Psi:\diff^{\,r}(V,\partial V)\to \bilip(\mathbb 
R^n)$} such that $||\Psi(f)-id||=\infty$ for any nontrivial element {\em $f\in \diff^{\,r}(V,\partial V)$.}  
Consequently $\diff^{\, r}(V,\partial V)$ embedds in $\qi(\mathbb Z^n)$.  \hfill  $\Box$ 
\end{theorem}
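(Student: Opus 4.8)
The plan is to reduce the assertion, by a rescaling, to the embedding $\bilip(\mathbb D^n,\mathbb S^{n-1})\to \bilip(\mathbb R^n)$ already supplied by Lemma \ref{embeddingofG0}, and then to pass to $\qi(\mathbb R^n)$ via the homomorphism $\eta$. First I would record the reduction set up in the discussion preceding the theorem: having chosen $r>0$ with $V\subset B(0,r)$, extending each $f\in \diff^{\,r}(V,\partial V)$ by the identity on $D(0,r)\setminus V$ yields a homeomorphism $F$ of the closed disk $D(0,r)$ that fixes the sphere $S(0,r)=\partial D(0,r)$ pointwise, and $f\mapsto F$ is an injective homomorphism $\diff^{\,r}(V,\partial V)\to \homeo(D(0,r),S(0,r))$. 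The essential point is that $F$ is bi-Lipschitz: since $V$ is compact, every $f\in \diff^{\,r}(V,\partial V)$ has $||f||,||f^{-1}||<\infty$ and so is bi-Lipschitz for the induced Riemannian metric $\delta_V$; by Corollary \ref{bilipmetricsonv} the metrics $\delta_V$ and $d$ are bi-Lipschitz equivalent, hence $f$ is $d$-bi-Lipschitz on $V$, and gluing with the identity on the complement keeps $F$ bi-Lipschitz. This gives a monomorphism $\diff^{\,r}(V,\partial V)\to \bilip(D(0,r),S(0,r))$.

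Next I would identify $D(0,r)$ with $\mathbb D^n$. The homothety $h:\mathbb R^n\to \mathbb R^n$, $v\mapsto rv$, carries $\mathbb D^n$ onto $D(0,r)$ and $\mathbb S^{n-1}$ onto $S(0,r)$; since $h$ is a similarity, conjugation $\psi\mapsto h^{-1}\psi h$ is a group isomorphism $\bilip(D(0,r),S(0,r))\xrightarrow{\cong}\bilip(\mathbb D^n,\mathbb S^{n-1})$ preserving bi-Lipschitz constants. Composing with the previous monomorphism yields a monomorphism $\diff^{\,r}(V,\partial V)\to \bilip(\mathbb D^n,\mathbb S^{n-1})$. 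Now Lemma \ref{embeddingofG0}(ii) provides the monomorphism into $\bilip(\mathbb R^n)$, while part (i) of the same lemma gives $||\Psi(g)-id||=\infty$ whenever $g\ne id$. Taking the theorem's map $\Psi$ to be the full composite, we obtain an embedding $\diff^{\,r}(V,\partial V)\to \bilip(\mathbb R^n)$ with $||\Psi(f)-id||=\infty$ for every nontrivial $f$, since such an $f$ has nontrivial image in $\bilip(\mathbb D^n,\mathbb S^{n-1})$. Finally, exactly as in the proof of parts (i) and (ii) of Theorem \ref{main}, the homomorphism $\eta:\bilip(\mathbb R^n)\to \qi(\mathbb R^n)$, $\phi\mapsto[\phi]$, has kernel $\{\phi\mid ||\phi-id||<\infty\}$, so $\eta\circ\Psi$ is injective; using $\qi(\mathbb Z^n)\cong\qi(\mathbb R^n)$ this embeds $\diff^{\,r}(V,\partial V)$ in $\qi(\mathbb Z^n)$.

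The only step carrying genuine content is the claim that $F$ is bi-Lipschitz, i.e. the gluing of the $d$-bi-Lipschitz map $f$ on $V$ with the identity on the complement. When the two points lie both in $V$, or both in the complement, the estimate is immediate; the delicate case is $x\in V$, $y\notin V$, where the segment $[x,y]$ may meet $\partial V$ in infinitely many points, so one cannot simply split it at its crossings. This is precisely the difficulty resolved in the proof of Corollary \ref{bilipmetricsonv}, and I expect to reuse that device: choose a threshold $\eta_0>0$ so that points of $\partial V$ within distance $\eta_0$ lie in a common component, partition $[x,y]$ into finitely many subsegments each either contained in $V$, contained in the complement, or of length $<\eta_0$ with endpoints on $\partial V$, and sum the bi-Lipschitz estimates, using that $F$ fixes $\partial V$ pointwise so the contributions add up along the collinear subdivision points. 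This is the main obstacle; once it is in hand, the rest is formal bookkeeping with the already-established maps $\Psi$ and $\eta$.
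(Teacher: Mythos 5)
Your proposal is correct and takes essentially the same route as the paper: extend $f$ by the identity to a large disk $D(0,r)$, use compactness of $V$ together with Corollary \ref{bilipmetricsonv} to see that the extension is bi-Lipschitz, rescale to $(\mathbb D^n,\mathbb S^{n-1})$, apply Lemma \ref{embeddingofG0}, and compose with $\eta:\bilip(\mathbb R^n)\to\qi(\mathbb R^n)$. One remark on the step you flag as the main obstacle: the gluing is simpler than you suggest, since $F$ fixes \emph{all} of $\partial V$ pointwise and the $d$-bi-Lipschitz estimate for $f$ holds for \emph{every} pair of points of $V$, so in the mixed case it suffices to split $[x,y]$ at a single point $z\in[x,y]\cap\partial V$ (giving $||F(x)-F(y)||\le \lambda||x-z||+||z-y||\le\lambda||x-y||$, with the lower bound obtained by the same argument applied to $F^{-1}$); the finite-partition/threshold device is genuinely needed only in Corollary \ref{bilipmetricsonv} itself, where the comparison is with the intrinsic metric and paths must stay inside $V$.
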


 \subsection{The spiral group}

Let $f: \mathbb R_{>0}\to \so(n)$ be a continuous map.  We write $f(t)=(f_{ij}(t))$.
Define $\phi=\phi_f: \mathbb R^n_0\to \mathbb R^n_0$, as $\phi(x)=f(||x||)) (x)$. 
It is readily verified that $\phi$ is a homeomorphism which maps each sphere centred at the origin to itself. It 
extends to a homeomorphism, again denoted $\phi_f$, of $\mathbb R^n$ that fixes the origin.  It is readily verified 
that $f\mapsto \phi_f$ is a homomorphism of groups $\textrm{Maps}(\mathbb R_{>0},\so(n))\to \homeo(\mathbb R^n)$ 
where the group structure on $\textrm{Maps}(\mathbb R_{>0}, \so(n))$ is obtained by pointwise operations. 
It is in fact a monomorphism.  We shall refer to the image of $\textrm{Maps}(\mathbb R_{>0},\so(n))$ as 
the {\it spiral group} of $\mathbb R^n$, and denote it by $\textrm{Spiral}(\mathbb R^n)$.

We will use the following norm for $n\times n$-matrices over $\mathbb R$.  If $A=(a_{ij})$, define $||A||_E:=(\sum a_{ij}^2)
^{1/2}$.  As usual, denoting the operator norm of $A$ by $||A||=\sup_{||x||=1}||Ax||$, we have 
$||A||\le ||A||_E\le \sqrt{n}||A||$.  Also $||Ax|| \le ||A||\cdot ||x||\le ||A||_E.||x||$.     

\begin{lemma}\label{rotations}
With notations as above, suppose that $f:\mathbb R_{>0}\to \so(n)$ is $C^1$ and 
satisfies the following condition:  there exist a constant $C=C(f)\ge 0$ such that, 
for all $1\le i,j\le n$, $|f'_{ij}(t)|\le C/t~\forall t>0.$  Then $\phi_f:\mathbb R^n
\to \mathbb R^n$ is bi-Lipschitz.
\end{lemma}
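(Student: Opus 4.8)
The plan is to reduce the claim to a uniform bound on the differential of $\phi_f$, invoking the criterion recorded in \S2: a $C^1$ self-map $\psi$ of a Riemannian manifold with both $||\psi||$ and $||\psi^{-1}||$ finite is bi-Lipschitz. Since $f$ is $C^1$ and $x\mapsto ||x||$ is smooth on $\mathbb R^n_0$, the map $\phi_f$ is $C^1$ on $\mathbb R^n_0$ and fixes the origin, so it suffices to show that $\sup_{x\neq 0}||T_x\phi_f||$ and the corresponding supremum for $\phi_f^{-1}$ are both finite. The resulting bi-Lipschitz estimate on $\mathbb R^n_0$ then extends to all of $\mathbb R^n$ by the extension property of bi-Lipschitz homeomorphisms of $\mathbb R^n_0$ noted in \S2.

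First I would compute $T_x\phi_f$ explicitly. Writing $r=||x||$ and $\phi_f(x)_k=\sum_j f_{kj}(r)x_j$, the chain rule together with $\partial r/\partial x_i=x_i/r$ gives
$$(T_x\phi_f)_{ki}=f_{ki}(r)+\frac{1}{r}\,(f'(r)x)_k\,x_i,$$
that is, $T_x\phi_f=f(r)+\tfrac{1}{r}\,(f'(r)x)\,x^T$, a sum of the orthogonal matrix $f(r)$ and a rank-one matrix.

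The key estimate exploits this rank-one structure. Since $f(r)\in\so(n)$, its operator norm is $1$. The operator norm of the rank-one term $\tfrac{1}{r}(f'(r)x)x^T$ equals $\tfrac{1}{r}||f'(r)x||\cdot||x||=||f'(r)x||$, because $||x||=r$. Using the norm inequalities recorded before the lemma, $||f'(r)x||\le ||f'(r)||_E\,||x||$, while $||f'(r)||_E=\big(\sum_{i,j}f'_{ij}(r)^2\big)^{1/2}\le nC/r$ by the hypothesis $|f'_{ij}(t)|\le C/t$; since $||x||=r$ this yields $||f'(r)x||\le nC$. The $1/t$ decay of the derivative thus exactly cancels the linear growth of $||x||=r$, giving $||T_x\phi_f||\le 1+nC$ uniformly in $x\neq 0$, whence $||\phi_f||\le 1+nC<\infty$.

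Finally I would treat $\phi_f^{-1}$. Because each $f(t)$ preserves norms, one checks directly that $\phi_f^{-1}=\phi_g$ with $g(t)=f(t)^{-1}=f(t)^T$; this $g$ is again $C^1$ and satisfies $|g'_{ij}(t)|=|f'_{ji}(t)|\le C/t$, the same hypothesis with the same constant. The identical computation then gives $||\phi_f^{-1}||=||\phi_g||\le 1+nC<\infty$, and the criterion of \S2 completes the proof. I expect the only genuinely delicate point to be the differential computation and the recognition of the rank-one structure, since it is precisely this structure---combined with the $C/t$ hypothesis---that produces a bound independent of $r$; a weaker assumption such as $|f'_{ij}(t)|\le C$ would leave a term growing like $Cr$, and the map would fail to be Lipschitz.
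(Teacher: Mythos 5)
Your proof is correct, but it takes a genuinely different route from the paper's. The paper never differentiates $\phi_f$: it works directly at the metric level, writing $\|\phi_f(x)-\phi_f(y)\|\le \|Ax-Bx\|+\|Bx-By\|$ with $A=f(r)$, $B=f(s)$, using orthogonality of $B$ to get $\|Bx-By\|=\|x-y\|$, and then applying the mean value theorem entrywise to $f_{ij}(s)-f_{ij}(r)$ together with the bound $|f'_{ij}(t_{ij})|\le C/t_{ij}\le C/r$ to obtain $\|Bx-Ax\|\le nC(s-r)\le nC\|x-y\|$; the inverse is handled exactly as you do, via $\phi_f^{-1}=\phi_g$ with $g=f^{T}$. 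You instead compute the Jacobian $T_x\phi_f=f(r)+\tfrac1r(f'(r)x)x^{T}$, exploit the rank-one structure to get the uniform bound $\|T_x\phi_f\|\le 1+nC$, and invoke the \S 2 criterion that finiteness of $\|\psi\|$ and $\|\psi^{-1}\|$ implies bi-Lipschitz, plus the extension property for bi-Lipschitz maps of $\mathbb R^n_0$. Your route has the merit of exposing \emph{why} the hypothesis $|f'_{ij}(t)|\le C/t$ is exactly right (the $1/t$ decay cancels $\|x\|=r$ in the rank-one term), and both arguments produce the same constant $nC+1$. The paper's route is more self-contained: it does not lean on the differential criterion (which is only sketched in \S 2) nor on the subtlety that bounded differential on the non-convex set $\mathbb R^n_0$ controls Euclidean distances only because the induced length metric on $\mathbb R^n_0$ coincides with the Euclidean one---a fact the paper records and which holds for $n\ge 2$ (for $n=1$ the lemma is vacuous since $\so(1)$ is trivial). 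You cite both of these supporting facts, so your argument is complete within the paper's framework, but you should be aware that this coincidence of metrics is the one load-bearing point where your reduction could fail on a general punctured or non-convex domain.
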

\begin{proof}
Let $x,y\in \mathbb R^n$.  Suppose that $r:=||x||=||y||$.  Then $||\phi(x)-\phi(y)||=||f(r)x-f(r)y||=||x-y||.$ 

Suppose that $s:=||y||>||x||=r$, $x\ne 0\ne y$.  Set $A:=f(r), B:=f(s)$.  Then 
$||\phi(x)-\phi(y)||=||Ax-By||\le ||Ax-Bx||+||Bx-By||\le ||A-B||.||x||+||x-y||$.

We have $B-A=(f_{ij}(s)-f_{ij}(r))=(f'_{ij}(t_{ij})).(s-r)$ for some $t_{ij}\in (r,s)$ by the mean value theorem.   By 
our hypothesis, $|f'_{ij}(t_{ij})|\le C/t_{ij}\le C/r$. 
So, using the norm $||X||_E=\sqrt{\sum_{i,j}|x_{ij}|^2}$ for 
$X=(x_{ij})\in M_n(\mathbb R)$, we obtain that 
$||B-A||_E^2= || (f'_{ij}(t_{ij}))||_E^2(s-r)^2 \le n^2C^2/t_{ij}^2\cdot(s-r)^2\le 
n^2 C^2(s-r)^2/r^2$.  This implies that 
$||Bx-Ax||\le nC(s-r)||x||/r=nC(||y||-||x||)\le nC||y-x||$.  So $||\phi(x)-\phi(y)||\le (nC+1)||x-y||~\forall x,y\in \mathbb R^n$.

Let $g(t)=f(t)^{-1}\in SO(n)$. Then $g_{ij}=f_{ji}, 1\le i,j\le n,$ and so $g$ also satisfies the condition $|g'_{ij}(t)|\le C/t ~\forall t>0$. 
Note that $\psi:=(\phi_f)^{-1}=\phi_g$ and so applying the above consideration 
to $\psi=\phi_g$ we obtain $||\psi(a)-\psi(b)||\le (nC+1)||a-b||$; equivalently $||x-y||/(nC+1)\le ||\phi(x)-\phi(y)||$ 
for all $x,y\in \mathbb R^n$. 
\end{proof}

It is readily checked that the space of all $C^1$-maps $f:\mathbb R_{>0}\to \so(n)$ satisfying the hypothesis of 
the above lemma 
forms a group under pointwise operations.  We shall denote this group by $\mathcal R$.  
Let 
$N:=\{f\in \mathcal R\mid \lim_{r\to \infty} f(r)=I_n\}$ and let    
$K=\{f\in \mathcal R\mid f(t)=I_n~\forall t\ge b,~\textrm{for some $b>0$}\}$.  Then $N$ and $K$ are subgroups of $\mathcal R$.  Indeed 
it can be seen that both $N$ and $K$ are normal subgroups of $\mathcal R$. 
In view of the above lemma, one has a well-defined map $\Phi: \mathcal R\to \qi(\mathbb R^n)$ defined as 
$f\mapsto [\phi_f]$.  We have the following 

\begin{theorem}  We keep the notations of Lemma \ref{rotations}.  Let $n\ge 2$.  
The map $\Phi: \mathcal R\to \qi(\mathbb R^n)$ sending $f$ to $[\phi_f]$ is a homomorphism whose kernel satisfies 
$K\subset \ker(\Phi)\subset N$. \\
\end{theorem}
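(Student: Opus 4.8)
The plan is to factor $\Phi$ through $\bilip(\br^n)$ and then read off both kernel inclusions from direct estimates on the displacement $\|\phi_f-id\|:=\sup_{x\in\br^n}\|\phi_f(x)-x\|$. First I would recall that $f\mapsto\phi_f$ is a homomorphism $\textrm{Maps}(\br_{>0},\so(n))\to\homeo(\br^n)$, as already observed above; the mechanism is that each $f(\|x\|)\in\so(n)$ preserves norms, so that $\|g(\|x\|)(x)\|=\|x\|$ and hence $\phi_f\circ\phi_g(x)=f(\|x\|)g(\|x\|)(x)=\phi_{fg}(x)$. By Lemma \ref{rotations} this homomorphism carries $\mathcal R$ into $\bilip(\br^n)$, since $\phi_f$ is bi-Lipschitz for every $f\in\mathcal R$. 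Postcomposing with the quotient homomorphism $\eta:\bilip(\br^n)\to\qi(\br^n)$, $g\mapsto[g]$, from the proof of Theorem \ref{main}(i) exhibits $\Phi=\eta\circ(f\mapsto\phi_f)$ as a homomorphism. Since $\ker\eta=\{g\in\bilip(\br^n)\mid\|g-id\|<\infty\}$, we obtain $\ker(\Phi)=\{f\in\mathcal R\mid\|\phi_f-id\|<\infty\}$, so both inclusions reduce to statements about this displacement.

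For $K\subset\ker(\Phi)$ I would take $f\in K$, say $f(t)=I_n$ for all $t\ge b$, and bound $\|\phi_f-id\|$. When $\|x\|\ge b$ we have $\phi_f(x)=I_n(x)=x$, so the displacement vanishes; when $\|x\|=r<b$, using that $f(r)$ is an isometry, $\|\phi_f(x)-x\|=\|(f(r)-I_n)x\|\le 2r<2b$. Hence $\|\phi_f-id\|\le 2b<\infty$ and $f\in\ker(\Phi)$.

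For $\ker(\Phi)\subset N$ the point is to recover the limit $f(r)\to I_n$ from the bounded-displacement condition by exploiting the radial scaling of $\phi_f$. Given $f\in\ker(\Phi)$, choose $M$ with $\|\phi_f-id\|\le M$, and write a nonzero vector as $ru$ with $\|u\|=1$ and $r=\|x\|>0$. Since $\|ru\|=r$, one has $\phi_f(ru)=f(r)(ru)$, whence $r\,\|(f(r)-I_n)u\|=\|\phi_f(ru)-ru\|\le M$, i.e. $\|(f(r)-I_n)u\|\le M/r$ for every unit vector $u$. Taking the supremum over $u$ gives the operator-norm bound $\|f(r)-I_n\|\le M/r\to 0$ as $r\to\infty$, so $\lim_{r\to\infty}f(r)=I_n$ and $f\in N$. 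All the estimates are elementary; the only step needing any care is this last one, where the geometric condition $\|\phi_f-id\|<\infty$ must be converted into the operator-norm decay $\|f(r)-I_n\|\le M/r$ through the substitution $x=ru$, while everything else is formal, resting on Lemma \ref{rotations} and the norm-preservation of $\so(n)$.
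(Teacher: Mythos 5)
Your proof is correct, and for the substantive inclusion $\ker(\Phi)\subset N$ it takes a genuinely different and more economical route than the paper. The paper argues the contrapositive: assuming $\lim_{r\to\infty}f(r)\ne I_n$, it invokes compactness of $\so(n)$ to extract a sequence $r_k\to\infty$ with $f(r_k)\to A\ne I_n$, then uses the rotation structure of $A$ (an invariant plane $V$ on which $A$ rotates by some angle $\theta\in(0,2\pi)$, together with nearby planes $V_k$ and angles $\theta_k\to\theta$ for the approximants $A_k=f(r_k)$) to produce points $x_k$ with $\|\phi_f(x_k)-x_k\|=2\sin(\theta_k/2)\,r_k\to\infty$. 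You instead work directly: from $\|\phi_f-id\|\le M$ and the exact scaling identity $\|\phi_f(ru)-ru\|=r\|(f(r)-I_n)u\|$ you get the operator-norm bound $\|f(r)-I_n\|\le M/r$, hence $f(r)\to I_n$. This bypasses the compactness argument and the perturbation of rotation planes and angles (the one step in the paper's proof requiring real care), and it even yields a quantitative rate, $\|f(r)-I_n\|=O(1/r)$, for kernel elements; in effect your identity that the displacement on the sphere of radius $r$ equals $r\,\|f(r)-I_n\|$ subsumes the paper's $2\sin(\theta_k/2)$ computation. The remaining parts (factoring $\Phi=\eta\circ(f\mapsto\phi_f)$ through $\bilip(\mathbb R^n)$ via Lemma \ref{rotations}, identifying $\ker\eta$ with the bounded-displacement subgroup, and the estimate $\|\phi_f-id\|\le 2b$ for $f\in K$) match what the paper treats as routine, just spelled out explicitly.
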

\begin{proof} 
The verification that $\Phi$ is a homomorphism is routine.  It is evident 
that $K\subset \ker(\Phi)$.  
It remains to show that $\ker(\Phi)\subset N$.
Suppose that $\lim_{r\to \infty}f(r)\ne I$.  
We shall find a sequence $(x_k)_{k\ge 1}$ in $\mathbb R^n$ such that $\lim_{k\to \infty} ||\phi_f(x_k)-x_k||=\infty$. 
By the compactness of $\so(n)$,  there exists a monotone sequence $(r_k)$ of positive numbers 
such that $\lim_{k\to \infty} r_k=\infty$ and $\lim_{k\to \infty}f(r_k)=A\in \so(n)$, $A\ne I_n$.  Set 
$A_k:=f(r_{k})$.  

There exists a plane $V\in \mathbb R^n$ such that $A(V)=V$ and $A|_V$ is a rotation by angle $\theta, 0<\theta<2\pi$.  
Since $\lim_{k\to \infty} A_k=A$, there exist planes $V_k\subset \mathbb R^n$ and rotation angles $\theta_k\in [0,2\pi)$ 
such that $A_k|_{V_k}$ is rotation by $\theta_k$ and $\lim_{k\to \infty}\theta_k=\theta$.  Choose $\alpha <\pi$ 
such that $0<\alpha<\theta<2\pi-\alpha$. 
For $k$ sufficiently large and $x_k\in V_k$ with $||x_k||=r_k$, we have $\alpha <\theta_k<2\pi-\alpha$ and so $||\phi(x_k)-x_k||
=||A_kx_k-x_k||=2\sin (\theta_k/2) ||x_k||\ge 2\sin (\alpha/2) r_k$ which tends to $\infty$ as $k\to \infty$.           
\end{proof}

\subsection{Concluding remarks} 
We conclude this note with the following remarks and questions.    

(i)  Thurston \cite{thurston} showed that $\diff^{\infty}_\kappa(M)$ is simple for any connected smooth 
manifold.   Mather \cite{mather} showed that $\diff^{\,r+}_\kappa(M)$ is simple for $1\le r<\infty$; here $\diff^{\,r+}$ 
indicates the group of $C^r$-diffeomorphisms all whose $r$-th derivatives are Lipschitz.  Note that $\diff^{\,r+}(M)_\kappa\subset 
\diff^{r+1}_\kappa(M)$.   Filipkiewicz \cite{filipkiewicz} showed that the group $\diff^{\, r}_\kappa(M)$ determines the topology and 
smoothness structure of $M$.  See also Rybicki \cite{rybicki-pams}, \cite{rybicki}.  
These results, combined with Theorem \ref{vbdryv}, show that  
$\qi(\mathbb R^n)$ contains many pairwise non-isomorphic infinite dimensional simple groups.

(ii)  We do not know if $\bilip(\mathbb R^n)\to \qi(\mathbb R^n)$ is a surjective for all $n$. 
When $n=1$, this is left as an exercise in \cite[\S3.3.B]{gromov-pansu}; it also follows immediately 
from the main result of \cite{s}.     

We end this paper with the following two questions: \\  
\indent
(a) {\it Does there exist an isomorphism $\alpha: \qi(\mathbb R^n)\to \qi(\mathbb R^m)$ if $m<n$?}  \\
When $n\ge 2$, the group $\qi(\mathbb R^n)$ contains elements  of all possible orders since it contains $\textrm{SO}(n)$.   However, 
it is easily seen that the group $\qi(\mathbb R)$ contains no elements of order $k$ if $3\le k<\infty$.  This answers the question in the negative when $m=1$.  
 
(b)  {\it Does $\qi(\mathbb R^n)$ contain a group isomorphic to $\mathbb Z_2^k$ for $k>n$?}\\
Since $\textrm{O}(n,\mathbb R)$ naturally embeds in $\qi(\mathbb R^n)$, it is clear that  
$\mathbb Z_2^n$ also embeds in $\qi(\mathbb R^n)$. 
We remark that affirmative answer to (a) leads to an affirmative answer to (b).  
For, if $\alpha: \qi(\mathbb R^n)\to \qi(\mathbb R^m)$ is an isomorphism where $m<n$, then using Theorem \ref{main}(iii) 
we see that $\qi(\mathbb R^n)$ contains a group isomorphic to  $\qi(\mathbb R^n)\times \qi(\mathbb R^{n-m})$.  The latter group 
evidently contains an copy of $\mathbb Z^{2n-m}_2$.  
 Repeated application of the embedding of $\qi(\mathbb R^n)\times 
\qi(\mathbb R^{n-m})$ into $\qi(\mathbb R^n)$ leads to the conclusion that 
$\qi(\mathbb R^n)$ contains a copy of $\qi(\mathbb R^n)^k\subset \qi(\mathbb R^{n})^k\times \qi(\mathbb R^{n-m})$ for 
every $k\ge 1$.    

It seems plausible that the answers to both (a) and (b) are in the negative.

\noindent
{\bf Acknowledgments:} Research of both authors was partially supported by a XII Plan Project, Department of 
Atomic Energy, Government of India.

\end{document}